\newcommand{\cA}{\mathcal{A}}
\renewcommand{\cD}{\mathcal{D}}
\newcommand{\cF}{\mathcal{F}}
\newcommand{\cG}{\mathcal{G}}
\newcommand{\cK}{\mathcal{K}}
\renewcommand{\cL}{\mathcal{L}}
\newcommand{\cM}{\mathcal{M}}
\newcommand{\cO}{\mathcal{O}}
\newcommand{\cP}{\mathcal{P}}
\newcommand{\cS}{\mathcal{S}}
\newcommand{\cU}{\mathcal{U}}
\newcommand{\cV}{\mathcal{V}}
\newcommand{\bC}{\mathbb{C}}
\newcommand{\bZ}{\mathbb{Z}}
\newcommand{\Et}{E_\tau}
\DeclareMathOperator{\Supp}{Supp}
\DeclareMathOperator{\Sym}{Sym}
\DeclareMathOperator{\Fitt}{Fitt}
\DeclareMathOperator{\Pic}{Pic}
\DeclareMathOperator{\id}{id}
\DeclareMathOperator{\Hom}{Hom}
\DeclareMathOperator{\Ext}{Ext}
\DeclareMathOperator{\rank}{rank}
\DeclareMathOperator{\length}{length}
\DeclareMathOperator{\gr}{gr}
\newcommand{\xra}{\xrightarrow}
\newcommand{\ka}{{\mathcal A}}
\newcommand{\kc}{{\mathcal C}}
\newcommand{\kd}{{\mathcal D}}
\newcommand{\ko}{{\mathcal O}}
\newcommand{\ke}{{\mathcal E}}
\newcommand{\kf}{{\mathcal F}}
\newcommand{\kl}{{\mathcal L}}
\newcommand{\km}{{\mathcal M}}
\newcommand{\ku}{{\mathcal U}}
\newcommand{\kv}{{\mathcal V}}
\newcommand{\C}{{\mathbb C}}
\font\tencyr=wncyr10
\font\sevencyr=wncyr7
\font\fivecyr=wncyr5
\def\cyr{\fam\cyrfam\tencyr}
\newtheorem{Th}{Theorem}
\newtheorem{Co}{Corollary}
\newtheorem{Pro}{Proposition}
\newtheorem{Def}{Definition}
\newtheorem{Lm}{Lemma}
\newtheorem{Rk}{Remark}
\numberwithin{equation}{section}
\begin{document}
\title{Vector bundles on non-Kaehler elliptic principal bundles}
\author[V.Br\^inz\u anescu]{Vasile Br\^inz\u anescu}  
\address{"Simion Stoilow" Institute of Mathematics
of the Romanian Academy,
 P.O.Box 1-764, 014700 Bucharest, Romania}
\email{Vasile.Brinzanescu@imar.ro}
\thanks{ V.Br\^inz\u anescu was partially supported by CNCSIS contract 1189/2009-2011}
\author[A.D.Halanay]{ Andrei D.Halanay}
\address{ Faculty of Mathematics and Computer Science,\\
University of Bucharest
\newline Str. Academiei 14
\newline 010014 Bucharest,Romania}
\email{halanay@gta.math.unibuc.ro}
\author[G.Trautmann]{G\"unther Trautmann}
\address{ Universit\"at Kaiserslautern\\
  Fachbereich Mathematik\\
\newline  Erwin-Schr\"odinger-Stra{\ss}e
\newline D-67663 Kaiserslautern}
\email{trm@mathematik.uni-kl.de} 

\begin{abstract}

We study relatively semi-stable vector bundles and their moduli on 
non-K\"ahler principal elliptic bundles over compact complex manifolds of 
arbitrary dimension. The main technical
tools used are the twisted Fourier-Mukai transform and a spectral cover
construction. For the important example of such principal bundles, the numerical 
invariants of a 3-dimensional non-K\"ahler elliptic principal bundle over a 
primary Kodaira surface are computed.

\end{abstract}

\keywords{ non-K\"ahler principal elliptic bundles, Calabi-Yau type 
threefolds, holomorphic vector bundles, moduli spaces}
\subjclass{14J60, 32L05, 14D22, 14F05,32J17,32Q25}
\maketitle

%\title{Vector Bundles on non-K\"ahler elliptic principal bundles}
%\author{V.Br\^ \i nz\u nescu, A. Halanay, G. Trautmann}
%\tableofcontents
%\maketitle
\begin{section}{Introduction}
The study of vector bundles over elliptic fibrations has been a very active 
area of research in both
mathematics and physics over the past twenty years; in fact, there is by now 
a well understood theory
for projective elliptic fibrations (see for example \cite{Do}, \cite{DP}, 
\cite{F}, \cite{FMW}, \cite{BM},\cite{HM},
\cite{Br2}, etc.). However, not very much is known about the non-K\"ahler 
case; 
the study of rank 2 vector bundles on
non-K\"ahler elliptic surfaces is done in \cite{BrM1},\cite{BrM2}. 
In this article we study relatively semi-stable
vector bundles on non-K\"ahler principal elliptic bundles over complex
manifolds of arbitrary dimension with the  invariant $\delta\not = 0$. 
One of the motivations for
the study of vector bundles on non-K\"ahler elliptic n-folds comes from recent 
developments in superstring theory, where 
six-dimensional non-K\"ahler manifolds occur in the context of 
$\mathcal{N}=1$ supersymmetric heterotic and type II 
string compactifications with non-vanishing background $H-$ field; 
in particular most of the non-K\"ahler examples 
appearing in the physics literature so far are non-K\"ahler principal 
elliptic fibrations (see \cite{BBDG}, \cite{CCFLMZ}, \cite{GP}). 
There are also two classes of non-K\"ahler Calabi-Yau
type threefolds appearing in the mathematical and physical literature: 
one is due to M. Gross (privately communicated to us by A. C\u ald\u araru). 
Other examples appear in \cite{A1}, \cite{A2},\cite{Ku} and \cite{CDHPS}.
The main technical tools used are the twisted Fourier-Mukai transform, 
introduced by A.C\u ald\u araru (see \cite{Ca1}) and the spectral cover 
construction, see \cite{FMW}, \cite{Don}, \cite{BBRP}, \cite{HM}.

The paper is organized as follows.  In the second section we determine the 
structure of the relative Jacobian of a principal elliptic bundle as
a moduli space and find out that it is the product of the fiber with the 
basis. In the third and fourth sections, using the relative Jacobian, 
we adapt the construction of C\u ald\u araru, \cite{Ca2}, to our case, 
obtaining a twisted Fourier-Mukai
transform. Similar results were obtained in different settings by 
O.Ben-Bassat \cite{B-B} and I.Burban and B.Kreussler \cite{BK}.
In the fifths section using this transform and the associated 
spectral cover we prove that the moduli space of rank $n$, relatively 
semi-stable vector bundles is corepresented by the relative Douady space of 
length $n$ and relative dimension $0$ subspaces of the relative Jacobian,
see theorem 5.

After reviewing some background results on torus bundles from \cite{Ho} in the first appendix, 
in the second appendix we compute the numerical 
invariants (Hodge and Betti numbers) 
of a principal elliptic bundle over
a primary Kodaira surface and use them to distinguish the non-trivial elliptic 
bundles. These invariants are also of interest for
physicists working on heterotic string-theory models with non-K\"ahler
Calabi-Yau type threefolds as backgrounds.
\bigskip

\centerline{Acknowledgements}
\bigskip
Part of this paper was prepared during the stay at the Kaiserslautern 
Technical University of A.D. Halanay and
V. Br\^\i nz\u anescu with scholarships offered by the Alexander von 
Humboldt Foundation in the framework of the
Stability Pact. The first author expresses his gratidute to the 
Max-Planck-Institute f\"ur Mathematik in Bonn; part of this paper was 
prepared during his stay there.
\end{section}

\begin{section}{Line bundles on elliptic principal bundles}
In this section we shall be concerned with the study of the (coarse) 
moduli space of line bundles over a principal elliptic bundle $\pi:X\to S$
, where $S$ is a compact complex manifold, with fiber 
$E:=\Et:=\bC / \Lambda$ ($\Lambda=\bZ \oplus \tau \bZ$). 
Among the invariants of such bundles is the homomorphism
$\delta : H^1(E,\C)\to H^2(S,\C)$ which is the $d_2$-differential
$E^{0,1}_2\to E^{2,0}_2$ of the Leray spectral sequence of the sheaf
$\C_X$ with terms 
$E^{p,q}_2= H^p(S, R^q\pi_\ast\C_X)\cong H^p(S,\C)\otimes H^q(E,\C),$
see also  the first appendix for more invariants.

We make the assumption that $\delta \neq 0$. 
In particular, $X \to S$ does not have the topology of a product.
We should note here that if $S$ is K\"ahler, then X is non-K\"ahler if and 
only if $\delta \neq 0$, see \cite{Ho}.

We shall need in the sequel the following result of Deligne, \cite{De}, 
in the formulation of \cite[Prop.5.2]{Ho}.
\begin{Th}\label{D}
Let $X \to S$ be a principal elliptic bundle. Then the following 
statements are equivalent:
\begin{itemize}
\item[a)] The Leray spectral sequence for $\mathbb{C}_X$ degenerates 
at the $E_2-$level;
\item[b)] $\delta:H^1( E,\bC) \to H^2(X,\bC)$ is the zero map;
\item[c)] The restriction map $H^2(X,\bC) \to H^2( E,\bC)$ to a fibre takes a 
non-zero value in $H^{1,1}_{ E}$.
\end{itemize}
\end{Th}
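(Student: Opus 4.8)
The plan is to exploit that the Leray spectral sequence of $\C_X$ is multiplicative and that the monodromy is trivial, so that on the second page $E_2^{p,q}=H^p(S,\C)\otimes H^q(E,\C)$ as a bigraded $\C$-algebra under cup product, with each differential $d_r$ a derivation. The first thing I would record is the shape of $H^\ast(E,\C)$: since $E=\Et$ is a one-dimensional complex torus, $H^0(E,\C)=\C$, the space $H^1(E,\C)$ has a basis $a,b$, and $H^2(E,\C)=\C\cdot(a\cup b)$ is one-dimensional and of pure Hodge type $(1,1)$, i.e. $H^2(E,\C)=H^{1,1}_E$. Consequently the clause ``takes a non-zero value in $H^{1,1}_E$'' in c) simply says that the fibre restriction $H^2(X,\C)\to H^2(E,\C)$ is non-zero.

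The implication a)$\Rightarrow$b) is immediate: degeneration at $E_2$ forces $d_2\equiv 0$, and $\delta$ is by definition the restriction of $d_2$ to $E_2^{0,1}$. For b)$\Rightarrow$a) I would argue by multiplicativity. As an algebra, $E_2$ is generated by the base classes $H^p(S,\C)\otimes 1$ (sitting in the row $q=0$) together with the two fibre classes $a,b\in E_2^{0,1}$. On a generator of bidegree $(p,0)$ every $d_r$ with $r\ge 2$ lands in a row of negative fibre degree and hence vanishes, while on $a,b$ the only differential that can be non-zero is $d_2$, which is exactly $\delta$. Thus if $\delta=0$ then every $d_r$ kills all algebra generators; since each $d_r$ is a derivation it then vanishes identically, page after page, and the sequence degenerates at $E_2$.

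For the equivalence b)$\Leftrightarrow$c) the key computation is the derivation identity
\[
d_2(a\cup b)=\delta(a)\otimes b-\delta(b)\otimes a\ \in\ E_2^{2,1}=H^2(S,\C)\otimes H^1(E,\C).
\]
Because $a,b$ form a basis of $H^1(E,\C)$, the right-hand side vanishes exactly when $\delta(a)=\delta(b)=0$, i.e. when $\delta=0$; hence $d_2(a\cup b)=0\iff$ b). Next I would identify the fibre restriction $H^2(X,\C)\to H^2(E,\C)$ with the edge homomorphism $H^2(X,\C)\twoheadrightarrow E_\infty^{0,2}\hookrightarrow E_2^{0,2}=H^2(E,\C)$, so that its image is precisely $E_\infty^{0,2}$. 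Since $H^2(E,\C)$ is one-dimensional, c) holds iff $E_\infty^{0,2}=E_2^{0,2}$, i.e. iff the generator $a\cup b$ is a permanent cycle. The only differentials leaving $E_\bullet^{0,2}$ are $d_2$ and, should $d_2$ vanish there, $d_3\colon E_3^{0,2}\to E_3^{3,0}$; but $d_2(a\cup b)=0$ already forces $\delta=0$, whence a) holds and $d_3\equiv 0$ as well. Therefore $a\cup b$ is a permanent cycle iff $d_2(a\cup b)=0$, and chaining the equivalences gives c)$\iff$ b).

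The routine verifications I am skipping are the triviality of the monodromy action (so that $R^q\pi_\ast\C_X$ is the constant sheaf $H^q(E,\C)$ and $E_2$ has the product form), the standard identification of the fibre restriction with the edge map, and the sign bookkeeping in the derivation rule. The main conceptual obstacle is the b)$\Rightarrow$a) step: one must be sure that the vanishing of $\delta$ \emph{alone} propagates to all higher differentials, and this is precisely where multiplicativity, together with the fact that the only non-trivial differential on the algebra generators is $\delta$, is indispensable.
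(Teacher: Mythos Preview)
Your argument is correct. The key observations---that the monodromy of a principal torus bundle is trivial so that $E_2^{p,q}\cong H^p(S,\C)\otimes H^q(E,\C)$ multiplicatively, that the derivation property reduces the vanishing of all $d_r$ to the vanishing of $\delta$ on the two generators $a,b\in E_2^{0,1}$, and that $H^2(E,\C)=H^{1,1}_E$ is one-dimensional so that c) is equivalent to $E_\infty^{0,2}=E_2^{0,2}$---are all sound, and the chain of equivalences you extract from them is clean.

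As for comparison with the paper: the paper does not supply its own proof of this theorem. It is quoted as a result of Deligne, in the formulation of H\"ofer \cite[Prop.~5.2]{Ho}, and is used as a black box (notably to deduce Corollary~\ref{C1}). Your write-up therefore goes beyond what the paper does, providing a self-contained argument in place of a citation. The approach you take---exploiting multiplicativity and the derivation property of the differentials---is the standard one and is essentially what underlies the cited references; in particular H\"ofer's treatment of principal torus bundles records exactly the degeneration at $E_3$ with $d_2$ governed by $\delta$ (see the first appendix of the present paper), of which your argument is the specialization to the elliptic-fibre case where the exterior algebra on $H^1(E,\C)$ has only two generators.
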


In our case the preceding theorem has a very important consequence
\begin{Co}\label{C1}
Let $X \to S$ be a principal elliptic bundle with $S$ a compact 
complex manifold and $\delta \neq 0$. Then for any vector bundle $\cF$
 over $X$ and any $s \in S$ the bundle $\cF|_{X_s}$ has degree $0$.
\end{Co}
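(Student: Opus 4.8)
The plan is to reduce the statement to the failure of condition (c) in Theorem~\ref{D}, combined with the fact that the first Chern class of a holomorphic vector bundle is of Hodge type $(1,1)$. First I would fix $s\in S$ and note that the fibre $X_s$ is a holomorphic submanifold of $X$ biholomorphic to the elliptic curve $\Et=\bC/\Lambda$; in particular it is a compact complex curve, so the degree of the restricted bundle is computed cohomologically as
\[
\deg\bigl(\cF|_{X_s}\bigr)=\int_{X_s}c_1\bigl(\cF|_{X_s}\bigr).
\]
By functoriality of Chern classes under pullback along the inclusion $j\colon X_s\hookrightarrow X$, one has $c_1(\cF|_{X_s})=j^\ast c_1(\cF)$, i.e. the image of the class $c_1(\cF)\in H^2(X,\bC)$ under the restriction map $H^2(X,\bC)\to H^2(X_s,\bC)$. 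Thus the degree is determined by the value of this restriction map on the single class $c_1(\cF)$.

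Next I would invoke the Hodge-theoretic input. The class $c_1(\cF)$ is of type $(1,1)$, being the first Chern class of a holomorphic vector bundle, and since $j$ is holomorphic the restriction map is a morphism of Hodge structures; hence $c_1(\cF)|_{X_s}$ lies in $H^{1,1}_{X_s}$. Now the standing hypothesis $\delta\neq 0$ is precisely the negation of statement (b) of Theorem~\ref{D}, so by the equivalence of (a), (b), (c) the statement (c) must fail as well: the restriction map $H^2(X,\bC)\to H^2(X_s,\bC)$ takes \emph{no} non-zero value in $H^{1,1}_{X_s}$. Combining the two observations, the class $c_1(\cF)|_{X_s}$ is simultaneously in the image of the restriction map and of type $(1,1)$, so it is forced to vanish, and therefore $\deg(\cF|_{X_s})=0$.

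The one step that requires a little care — and which I expect to be the only genuine (if minor) obstacle — is the identification of the hypothesis ``$\delta\neq 0$'' for the transgression $H^1(\Et,\bC)\to H^2(S,\bC)$ of the section's opening with the failure of condition (b), which is phrased via $H^1(\Et,\bC)\to H^2(X,\bC)$. These two transgressions differ only by the edge homomorphism $\pi^\ast\colon H^2(S,\bC)\to H^2(X,\bC)$, and their agreement is exactly the content of the Deligne/H\"ofer formulation we are quoting, so no independent argument is needed. Everything else is routine: the compatibility of $c_1$ with restriction, the preservation of Hodge type under holomorphic pullback, and the elementary remark that on the one-dimensional fibre $H^2(X_s,\bC)=H^{1,1}_{X_s}$, so that (c) simply asserts non-triviality of the restriction to a fibre and its negation gives the vanishing we want.
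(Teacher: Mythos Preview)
Your argument is correct and is exactly the paper's proof, only spelled out in more detail: the paper simply writes $c_1(\cF|_{X_s})=r(c_1(\cF))=0$ ``by the theorem'', meaning precisely that $\delta\neq 0$ negates (b), hence (c), so the restriction $r:H^2(X,\bC)\to H^2(E,\bC)=H^{1,1}_E$ is zero. One small caution: in your middle paragraph you assert that $c_1(\cF)$ is ``of type $(1,1)$'' and that restriction is a morphism of Hodge structures, but $X$ is non-K\"ahler here and $H^2(X,\bC)$ need not carry a Hodge decomposition; your own final remark that $H^2(X_s,\bC)=H^{1,1}_{X_s}$ on the one-dimensional fibre is what actually does the work, and makes that detour unnecessary.
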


{\bf Proof.} Indeed let $r:H^2(X,\bC) \to H^2( E,\bC)$ be the restriction 
map. We have that $c_1(\cL|_{X_s})=r(c_1(\cL))=0$ by the theorem.
$\Box$
\vskip5mm

Let us recall now the definition of the Jacobian variety $J$ of a smooth curve 
$C$, see for instance \cite[IV.4]{Ha}.
Let $\Pic^0(C/T)$ for any analytic space $T$ denote the group 
$$\{\cL\in\Pic(T\times C)\; |\; \deg(\cL|_{\{t\}\times C})=0 
\text{ for any } t\in T\} / p^*\Pic(T),$$ 
where $p:T\times C \to T$ is the second projection.
The Jacobian variety of $C$ will be a variety $J$, 
together with an element $\cP \in \textrm{Pic}^0(C/J)$ such
that for any analytic space $T$ and any $\cM \in \textrm{Pic}^0(C/T)$ there 
is a unique morphism $f:T \to J$ such that 
$(f\times\id_C)^*\cP \sim \cM$ in 
$\textrm{Pic}^0(T\times C)$, i.e. $J$ represents the functor
$T\mapsto \Pic^0(C/T)$. 
It is well known that $J$ exists for any smooth curve $C$. When $C$ is an 
elliptic curve $E$ then $J= E^*$, the dual torus, and $\cP$ is called 
a Poincar\' e bundle. In this case $\cP$ is a line bundle over 
$ E^* \times  E$ such that 
$\cP|_{\{[\cL]\}\times E}\simeq\cL$, and $E^*\simeq\Pic^0( E).$  
We pass  now to the relative case for elliptic principal bundles.

\begin{Def}\label{reljac}
Let $X\xrightarrow{\pi}S$ be an elliptic principal bundle with typical 
fibre an elliptic curve $E_\tau$ and base $S$ a smooth manifold.  
Let $F:(An/S)^{op} \to (Sets)$ be the functor from the category 
of analytic spaces over $S$ to the category of sets, given, for any 
commutative diagram
\begin{equation} 
\xymatrix{X_T \ar[d]^\pi \ar[r] & X \ar[d]^\pi \\
T \ar[r] & S,}
\end{equation}
where $X_T:=X \times_S T$, by 
$$F(T):=\{\cL\; \textrm{invertible on}\; X_T\; |\; \textrm{deg}(\cL|_{X_{T,t}})
=0,\; \text{ for all}\quad t\in T\}/\sim, $$
where $\cL_1 \sim \cL_2$ if there is a line bundle $L$ on $T$ such 
that $\cL_1 \simeq \cL_2 \otimes \pi^*L$. 

A variety $J$ over $S$ will be called the relative Jacobian of $X$ if

(i) it corepresents the functor $F$, see \cite[Def. 2.2.1]{HL}, 
i.e. there is a natural transformation
$F\xra{\sigma}\Hom_S(-, J)$ and for any other variety $N/S$ with a natural 
transformation 
$F\xra{\sigma'}\Hom_S(-,N)$ there is a unique $S$-morphism $J\xra{\nu}N$
such that $\nu_*\circ\sigma=\sigma'$.

(ii) for any point $s\in S$ the map
$F(\{s\})\to \Hom_S(\{s\}, J)\simeq J_s$ is bijective. Then each fibre 
$J_s$ is the Jacobian of the fibre $X_s\simeq  E$.

\end{Def}

%By use of the relation $\sim$ we have also a convenient description of 
%$F(T)$ by taking isomorphism classes of line bundles, that is, 
%$F(T)=\{\cL \in \textrm{Pic}(X_T) \; | \deg(\cL|_{X_{T,t}})=0\;
%\text{ for all}\quad t\in T\}/\sim$ and $[\cL_1] \sim [\cL_2]$ if 
%$[\cL_1]=[\cL_2]\cdot \pi^*L$, with $L \in \Pic(T)$. 

If $X$ is projective, the existence of the relative Jacobian is well known,
because it can be identified with the coarse relative moduli space
of stable locally free sheaves of rank 1 and degree 0 on the fibres of $X$,
see \cite{HL}, \cite{Ca1}.
The relative Jacobian exists also in our non-K\"ahler case. It is just 
the product $S\times E^*$ and has the following special properties.
\vskip5mm

\begin{Th}\label{T3.2}

(i) The functor $F$ is corepresented by $J:=  S\times E^*$.

(ii) For any point $s\in S$ the map
$F(\{s\})\to \Hom_S(\{s\}, J)\simeq J_s\simeq  E^*$ is bijective.

(iii) The map $\sigma (T)$ is injective for any complex space $T$.

(iv) The functor $F$ is locally representable by $J=S\times E^*$,
i.e. if $U\subset S$ is a trivializing open subset, $\sigma(U)$
is bijective.
\end{Th}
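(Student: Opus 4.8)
The plan is to construct the natural transformation $\sigma$ explicitly and then verify the four properties, with the key geometric input being the fibrewise degree-$0$ statement from Corollary~\ref{C1} together with the triviality of the fibration at the level of line bundles on each fibre.

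First I would build $\sigma$. Given a test space $T/S$ and a class $[\cL]\in F(T)$, the restriction $\cL|_{X_{T,t}}$ is a degree-$0$ line bundle on the fibre $X_{T,t}\simeq E$ for every $t\in T$, hence determines a point of $\Pic^0(E)\simeq E^*$; I expect this assignment to glue into a morphism $T\to S\times E^*$ (the $S$-component being the structure map $T\to S$, the $E^*$-component recording the isomorphism class of the restricted line bundle fibrewise). The well-definedness on $\sim$-classes is immediate, since tensoring by $\pi^*L$ does not change any fibrewise restriction. This gives $\sigma(T)\colon F(T)\to \Hom_S(T,J)$, natural in $T$.

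Next I would prove injectivity of $\sigma(T)$, which is (iii), since it is the cleanest of the four and feeds the others. Two classes $[\cL_1],[\cL_2]$ with the same image have $\cL_1|_{X_{T,t}}\cong \cL_2|_{X_{T,t}}$ for all $t$, so $\cM:=\cL_1\otimes\cL_2^{-1}$ is fibrewise trivial; the point is then to show $\cM\cong\pi^*L$ for some $L$ on $T$. This is a seesaw-type argument: fibrewise triviality forces $\pi_*\cM$ to be a line bundle $L$ on $T$ and the adjunction counit $\pi^*\pi_*\cM\to\cM$ to be an isomorphism, using that $H^0$ of a trivial line bundle on $E$ is one-dimensional and varies well in families (cohomology and base change). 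Then (iv), local representability, follows by the same mechanism: over a trivializing $U\subset S$ one has $X_U\cong U\times E$, so a Poincar\'e bundle $\cP$ on $E^*\times E$ pulls back to a universal object on $J_U\times_U X_U$, and $\sigma(U)$ becomes a bijection by the representing property of the absolute Jacobian $E^*$ of $E$ combined with the seesaw argument for the $\sim$-relation. Property (ii) is the special case $T=\{s\}$ of (iv) once one notes a single point always lies in a trivializing neighbourhood, recovering that $J_s$ is the Jacobian $E^*$ of the fibre.

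Finally I would assemble (i), corepresentability, from (iii) and (iv). Since $J$ is covered by trivializing opens $U$ on which $F$ is representable, and the local isomorphisms are compatible (they all arise from the same fibrewise-$\Pic^0$ construction), the natural transformation $\sigma\colon F\to\Hom_S(-,J)$ is universal: any $\sigma'\colon F\to\Hom_S(-,N)$ is determined over each $U$ by the representing object, and these glue to a unique $S$-morphism $\nu\colon J\to N$ with $\nu_*\circ\sigma=\sigma'$. The main obstacle I anticipate is the gluing step, i.e. showing that the local identifications $F|_U\cong\Hom_S(-,J)|_U$ patch globally despite $F$ being only corepresented rather than represented by $J$; the essential ingredients to overcome it are precisely (iii), which rules out gluing ambiguity coming from twists by line bundles on the base, and the fact from Corollary~\ref{C1} that the relevant restriction-to-fibre maps on $H^2$ vanish, so that every line bundle in $F(T)$ genuinely has degree $0$ on fibres and the fibrewise $\Pic^0$ interpretation is available everywhere.
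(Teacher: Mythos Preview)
Your approach is essentially the paper's: build $\sigma$ from the universal property of $E^*$, prove injectivity via a seesaw argument (the paper's Lemma~\ref{lemma4} is exactly your ``$\pi_*\cM$ is a line bundle and $\pi^*\pi_*\cM\to\cM$ is an isomorphism''), read off (ii) and (iv) from local triviality, and deduce (i) by gluing the local inverses into a unique $\nu:J\to N$.

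The one place your sketch is thinner than the paper is the \emph{construction} of $\sigma$ itself. You describe $\sigma$ pointwise (``$\cL|_{X_{T,t}}$ determines a point of $E^*$'') and say you expect the assignment to glue to a holomorphic map. The paper makes this step explicit and it is where the nontriviality of $X$ actually enters: over a trivializing $S_i$ one gets a genuine holomorphic $\phi_i:T_i\to E^*$ from the universal property of the Poincar\'e bundle, but on an overlap $T_{ij}$ the two trivializations differ by a fibrewise translation $\theta_{ij}$, so the local avatars $\cL_i,\cL_j$ of $\cL$ differ by $\theta_{ij}^*$. One needs the observation (Corollary~\ref{C2}, itself a consequence of the seesaw Lemma~\ref{lemma4}) that pulling back a degree-$0$ line bundle by a translation changes it only by $\pi^*(\text{line bundle on }T)$, hence $\cL_i\sim\cL_j$ and $\phi_i=\phi_j$. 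This translation-invariance is precisely why the relative Jacobian is the \emph{product} $S\times E^*$ despite $X$ being a nontrivial bundle, and it is the ingredient your outline does not name. You have the seesaw lemma in hand for (iii), so the fix is just to apply it once more, earlier, to the transition functions of $X$.
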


For the proof we use the following Seesaw lemma and its Corollary.

\begin{Lm}\label{lemma4} Let $Y\xra{q} T$ be a principal elliptic bundle 
over a complex 
analytic space $T$ with fibre $ E$ and let $\km$ be an invertible sheaf on $Y$
such that $\km_t=\km|Y_t$ is trivial on any fibre of $q$. Then $L=q_*\km$
is locally free of rank 1 and $\km=q^*L$.
\end{Lm}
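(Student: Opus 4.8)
The plan is to read Lemma~\ref{lemma4} as a Seesaw-type statement and prove it via Grauert's direct image theorem together with cohomology and base change. First I would record the structural hypotheses on $q$: since the fibre $E$ is a compact curve, $q$ is proper, and since $q$ is a fibre bundle (locally a projection $U\times E\to U$) it is flat; moreover $\km$, being invertible, is flat over $T$. The hypothesis that $\km_t:=\km|_{Y_t}$ is trivial on every fibre gives, for each point $t\in T$,
$H^0(Y_t,\km_t)\cong H^0(E,\ko_E)\cong\C$ and $H^1(Y_t,\km_t)\cong H^1(E,\ko_E)\cong\C$, so that both $h^0$ and $h^1$ are constant (equal to $1$) as $t$ varies.

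Second, I would deduce that $L:=q_*\km$ is invertible and that its formation commutes with base change. The clean way, valid over an arbitrary (possibly non-reduced) analytic base, is to represent $Rq_*\km$ near a point by a two-term complex of finite locally free sheaves $K^0\xra{d}K^1$ (only two terms, because the fibres are curves), with $H^0(Y_t,\km_t)=\ker d(t)$ and $H^1(Y_t,\km_t)=\Coker d(t)$. The constancy of $\dim\ker d(t)=1$ and of $\dim\Coker d(t)=1$ forces $\rank d(t)$ to be locally constant; a morphism of locally free sheaves of locally constant rank has locally free kernel and cokernel whose formation commutes with base change. Hence $L=\ker d$ is invertible and one gets the base-change isomorphism $L\otimes k(t)\cong H^0(Y_t,\km_t)$ for every $t$.

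Third, I would examine the adjunction (evaluation) morphism $\varepsilon\colon q^*L=q^*q_*\km\to\km$. Restricting $\varepsilon$ to a fibre $Y_t$ and using $L\otimes k(t)\cong H^0(Y_t,\km_t)$, the restriction becomes the evaluation map $H^0(Y_t,\km_t)\otimes_\C\ko_{Y_t}\to\km_t$. Because $\km_t$ is trivial, a generator of $H^0(Y_t,\km_t)$ is a nowhere-vanishing section, so this evaluation map is an isomorphism on each fibre. Since the fibres $Y_t$ cover $Y$, the map $\varepsilon\otimes k(y)$ is an isomorphism at every point $y\in Y$; by Nakayama $\varepsilon$ is then surjective at every point, and a surjection between invertible sheaves is automatically an isomorphism (locally it is multiplication by a unit). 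This yields $\km\cong q^*L$ and finishes the argument.

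The main obstacle is the base-change step, namely securing the identification $L\otimes k(t)\cong H^0(Y_t,\km_t)$ over a base $T$ that need not be reduced, since the reduced form of Grauert's theorem does not directly apply; this is exactly what the constant-rank argument for $d$ provides. Once that identification is in hand, the remaining steps---computing the fibrewise restriction of $\varepsilon$ and invoking Nakayama---are routine.
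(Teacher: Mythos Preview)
Your argument is correct over a reduced base $T$ and takes a genuinely different route from the paper. The paper does not appeal to the two-term perfect complex or to constancy of both $h^0$ and $h^1$; instead it twists $\km$ by $p_2^\ast\ko_E(a)$ for a chosen point $a\in E$, so that $h^1(\km_t(a))=0$ and ordinary base change immediately yields that $p_{1\ast}\km(a)$ is invertible. It then uses the short exact sequence $0\to\km\to\km(a)\to\km|_{T\times\{a\}}\to 0$ to compare $p_{1\ast}\km$ with $p_{1\ast}\km(a)$, showing the image of the restriction map $\alpha$ vanishes after pulling back and restricting to fibres. Your approach is more direct and avoids the auxiliary twist; the paper's approach sidesteps the awkward case $h^0=h^1=1$ by reducing to one where $h^1=0$.

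One point, however, does not hold up. Your assertion that the constant-rank argument for $d$ secures the result over non-reduced $T$ is not correct: over $T=\Spec\C[\epsilon]/(\epsilon^2)$ the map $d=\epsilon\colon\ko_T\to\ko_T$ has $d(t)=0$ at the unique closed point (hence constant rank zero), yet $\ker d=\epsilon\,\ko_T$ is not locally free. In fact the Lemma itself fails in that setting: if $\km$ is a nontrivial first-order deformation of $\ko_E$ on $T\times E$, then $\km_t\cong\ko_E$ at the unique closed point, but as an $\ko_E$-module $\km$ is the Atiyah bundle $\ka_2$, so $q_\ast\km\cong\C$ is not free over $\C[\epsilon]$ and $\km\not\cong q^\ast L$ for any $L$. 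For reduced $T$ (which is all the paper actually uses, the base in the applications being a complex manifold) your Grauert-style argument is valid, since a coherent sheaf with constant fibre dimension over a reduced base is locally free; just drop the claim about non-reduced bases.
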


{\bf Proof.} Because the statements are local over $T$ and $Y$ is locally 
trivial, we may assume that $Y=T\times E$ and that $q$ is the first 
projection $p_1$.
We first note that the canonical homomorphism $p_1^\ast p_{1\ast}\cM\to\cM$ 
is an isomorphism because each $\km_t$ is trivial so that it is an 
isomorphism when restricted to a fibre of $p_1$.
In order to show that $p_{1\ast}\cM$ is invertible we consider the 
sheaf $\cM(a):=\cM\otimes p_2^\ast\cO_{ E}(a)$ for some $a\in E$.
Then $h^0(\cM_t(a))=1$ and $h^1(\cM_t(a))=0$, whereas $h^1(\cM_t)=1$.
Let 
$\phi^i(t): (R^i p_{1*} \cM(a))(t) \to H^i(X_t,\cM_t(a))$ denote the base 
change homomorphism. Because $\phi^1(t)=0$ it follows that 
$R^1 p_{1*}\cM(a)=0$, and by that that $L:= p_{1*}\cM(a)$ is invertible,
see e.g. \cite[Th.12.11]{Ha}.

Let $\bC(a)$ be the sky-scraper sheaf with stalk $\bC$ at $a\in E$.
Then $\cM\otimes p_2^*\bC(a)=\cM |_{T \times \{a\}}$ and we have the 
exact sequence
$$0 \to \cM \to \cM(a) \to \cM \otimes p_2^*\bC(a) \to 0,$$
giving rise to the long exact sequence
$$0\to p_{1*}\cM\to p_{1*}\cM(a)\xrightarrow{\alpha}
p_{1*}(\cM|_{S \times \{a\}})\to R^1 p_{1*}(\cM)\to 0.$$

Denoting by $\cA$ the image of $\alpha$ and pulling back to $y$, we obtain
the exact sequence 
$$p_1^*p_{1*}\cM\xrightarrow{\gamma}p_1^*p_{1*}\cM(a)\to p_1^*\cA\to 0.$$ 
The restriction of $\gamma$ to a fibre becomes the canonical map
$$H^0(X_t,\cM_t)\otimes\cO_{Y_t}\to H^0(Y_t,\cM_t(a))\otimes\cO_{Y_t},$$
which is an isomorphism. Hence $(p_1^*\cA)_t=0$ for any $t\in T$, and then
also $p_1^*\cA=0$ and finally $\cA=0.$
This proves that $p_{1*}\cM\cong p_{1*}\cM(a)=L$, and we have 
$\cM\cong p_1^*p_{1*}\cM\cong p_1^*L.$

One should note here that $p_1^\ast p_{1\ast}\cM(a)\to\cM(a)$ is not an 
isomorphism, having $\cM |_{T \times \{a\}}$ as its cokernel, because
$H^0(X_t,\cM_t(a)) \otimes \cO_{Y_t} \to \cM_t(a)$ is not an isomorphism.
$\Box$

\begin{Co}\label{C2}
Let $T$ be a complex space and 
$\xymatrix{T \times E_\tau \ar[r]^{\theta}_{\sim} & T \times E_\tau}$ 
an isomorphism of the form 
$\theta(t,\alpha)=(t,\alpha+ \lambda(t))$ with $\lambda:T \to  E$ 
a holomorphic map. 
Then for any invertible sheaf $\cL$ on $T \times E_\tau$ with $\deg(\kl_t)=0$
for any $t\in T$, there is an invertible sheaf $L$ on $T$ such that 
$\theta^* \cL \simeq \cL \otimes p_1^*L$. 
\end{Co}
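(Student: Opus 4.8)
The plan is to deduce the statement from Lemma \ref{lemma4} by producing a sheaf on $T\times E_\tau$ that is fibrewise trivial. I would set
\[
\cM \;:=\; \theta^*\cL\otimes\cL^{-1},
\]
an invertible sheaf on $T\times E_\tau$. If I can show that $\cM$ is trivial on every fibre $\{t\}\times E_\tau$ of the first projection $p_1$, then Lemma \ref{lemma4}, applied to the trivial principal bundle $q=p_1\colon T\times E_\tau\to T$, produces an invertible sheaf $L:=p_{1*}\cM$ on $T$ together with an isomorphism $\cM\simeq p_1^*L$. Tensoring the defining relation $\cM=\theta^*\cL\otimes\cL^{-1}$ with $\cL$ then gives exactly $\theta^*\cL\simeq\cL\otimes p_1^*L$, which is the assertion. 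So the whole task is to verify fibrewise triviality of $\cM$.

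The central computation is the restriction of $\cM$ to a fibre. Since $\theta(t,\alpha)=(t,\alpha+\lambda(t))$ fixes the $T$-coordinate, it maps $\{t\}\times E_\tau$ to itself and restricts there to the translation $\varphi_{\lambda(t)}\colon\alpha\mapsto\alpha+\lambda(t)$ of $E_\tau$. Writing $\cL_t$ for the restriction of $\cL$ to $\{t\}\times E_\tau$, I would therefore obtain
\[
\cM_t \;=\; (\theta^*\cL)|_{\{t\}\times E_\tau}\otimes(\cL^{-1})|_{\{t\}\times E_\tau}\;\simeq\;\varphi_{\lambda(t)}^*\cL_t\otimes\cL_t^{-1}.
\]
Thus the triviality of $\cM_t$ is reduced to the invariance of a degree-$0$ line bundle on the elliptic curve $E_\tau$ under translation.

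To justify this invariance I would invoke the standard description of $\Pic^0$ of an abelian variety: a line bundle on $E_\tau$ lies in $\Pic^0(E_\tau)$ precisely when it is invariant under all translations, i.e.\ $\varphi_a^*\cN\simeq\cN$ for every $a\in E_\tau$. On the elliptic curve $E_\tau$ the group $\Pic^0(E_\tau)$ is exactly the group of degree-$0$ line bundles (this was recalled above, together with $\Pic^0(E)\simeq E^*$), so the hypothesis $\deg(\cL_t)=0$ places $\cL_t$ in $\Pic^0(E_\tau)$ and hence yields $\varphi_{\lambda(t)}^*\cL_t\simeq\cL_t$. Consequently $\cM_t\simeq\cO_{E_\tau}$ for every $t\in T$. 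This fibrewise invariance is the only genuinely substantive point; once it is in hand, $\cM$ satisfies the hypothesis of Lemma \ref{lemma4} and the passage from ``trivial on all fibres'' to ``pulled back from $T$'' is precisely the content of that lemma, so nothing further needs to be checked. I would therefore devote the proof essentially to the fibre identity $\cM_t\simeq\varphi_{\lambda(t)}^*\cL_t\otimes\cL_t^{-1}$ and to the translation-invariance of $\Pic^0(E_\tau)$, and then quote Lemma \ref{lemma4} to conclude.
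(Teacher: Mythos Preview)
Your proposal is correct and follows essentially the same approach as the paper: set $\cM=\theta^*\cL\otimes\cL^{-1}$, use translation-invariance of degree-$0$ line bundles on $E_\tau$ to see that $\cM_t$ is trivial for every $t$, and then apply Lemma~\ref{lemma4}. The paper's proof is just a terser version of exactly this argument.
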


{\bf Proof.} Because $\cL_s$ has degree $0$, 
$(\theta_s^*\cL_s) \otimes \cL_s^{-1}$ is trivial on any fiber 
(recall that a 
line bundle of degree $0$ is isomorphic with its pull-back via a translation, 
see \cite{BL}, \cite{M}). Now apply the Lemma to $\theta^*\kl\otimes\kl^{-1}$ 
$\Box$
\vskip5mm

{\bf Proof of Theorem \ref{T3.2}.}  Let $f:T \to S$ be a morphism of 
analytic spaces and let $\cL$ be a line bundle on $X_T$ . Let $\{S_i\}$ 
be an open 
cover of $S$ that gives a trivialization of the bundle $X$ 
(that is $X_i:=X|_{S_i} \simeq S_i \times E_\tau$). 
Taking the inverse image of this 
cover we get an open cover $\{T_i=f^{-1}S_i\}$ of $T$ with the same property. 
Let us denote $X_{T,i}:=X_T|_{T_i}$. There are trivializing maps
\begin{equation}
\xymatrix{X_{T,i}\ar[0,2]_{\sim}^{\theta_i} \ar[dr] & & T_i \times E_\tau 
\ar[dl] \\
& T_i\; . & }
\end{equation}
Let $\cL_i$ be the sheaf on $T_i\times E$ defined by  
$\theta_i^\ast\cL_i=\cL|_{X_{T,i}}$ such that  
$\cL_j \simeq (\theta_i \circ \theta_j^{-1})^*\cL_i$ over $T_{ij}$. 

Let now $\cP$ be a Poincare bundle on $E_\tau^* \times E_\tau$. Then for any 
$i$ we'll have a {\bf unique} morphism $\phi_i:T_i \to E^*_{
\tau}$ such that $\cL_i \sim (\phi_i \times \textrm{id})^*(\cP)$. 
Taking into account that $\theta_i\circ\theta_{j}^{-1} = id\times\theta_{ij}$
and  $\theta_{ij}$ acts by translations, the preceding corollary 
implies that $\cL_i \sim \cL_j$ on $T_{ij}.$ 
Therefore $\phi_i=\phi_j$ on $T_{ij}$. So we are given a global morphism 
$\phi :T \to E_\tau^*$.

Let now 
\begin{equation}
\xymatrix{T \ar[0,2]^-{\tilde{\phi}} \ar[dr] & & S \times E_\tau^* =J \ar[dl] \\& S &}
\end{equation}
be the corresponding map $\tilde{\phi}:=(f,\phi)$. This provides us 
with a map $F(T)$  $\xra{\sigma(T)} \textrm{Hom}_S(T,J)$. 
It is straightforward to check that $\sigma:F \to \textrm{Hom}_S(-,J)$ 
is a morphism of functors. The minimality of $J$ will be proved after the proof
of (iv).
\vskip3mm

ii) Property (ii) follows directly from the definition of the maps
$\sigma(\{s\})$.

iii) We show next that each map $\sigma(T):F(T) \to \textrm{Hom}_S(T,J)$ is  
injective.
For that let $\cL$ and $\cL'$ be two line bundles such that their 
respective maps $\phi$ and $\phi'$ are equal. 
We need to show that $\pi_*(\cL' \otimes \cL^{-1})=:L$ is locally-free. 

We have that $\phi_i=\phi_i'$ for any $i$. This implies that over 
$X_{T,i}$ we have 
\begin{equation}\label{e1}
\cL_i \sim \cL_i'.
\end{equation} 
Because of the relation (\ref{e1}) $\cL_t' \otimes \cL_t^{-1}$ is trivial 
for any $t \in T$. By the above corollary we obtain that 
$\pi_*(\cL' \otimes \cL^{-1})$ is locally free. 

iv) Let $U\subset S$ be trivializing for the bundle $X$ such that 
$X_U\simeq U\times E$. By the universal property of the dual torus
$ E^*$, the map $\sigma(U)$ as well as all the maps $\sigma(T)$
for $T\to U$ are bijective. 

Let now $F\xra{\sigma'} \Hom_S(-,N)$ be any natural transformation.
For any $s\in S$ there is the map 
$\nu_s:= \sigma'(\{s\})\circ\sigma(\{s\})^{-1}: J_s\to N_s$, thus defining
a map $\nu : J\to N$ over $S$. In order to show that $\nu$ is a morphism,
we just remark that $\nu_s$ is the restriction of the map 
$\sigma'(U\times  E^*)\circ\sigma(U\times  E^*)^{-1}(\id): J_U\to N_U$ 
for a trivializing open subset $U\subset S$ and any $s\in U$. 

Finally  $\nu_*\circ\sigma=\sigma'$ follows from (iv) of the theorem
and the fact that the functor $\Hom_S(-,N)$ is a sheaf, using a trivializing
covering for $X$ of $S$. This completes the proof of (i).
$\Box$
\vskip3mm

\begin{Rk} There is a very convenient description of $\sigma(S)$ as follows.
Let $\cL$ be an arbitrary line bundle over $X$. We know that $[\cL]\in F(S)$
by Corollary \ref{C1}. 
By the above proof $\phi:=\sigma(S)([\cL]):S \to S \times  E^*$
is given by $\phi(s)=(s,x)$ with $x=[\cL|_{X_s}]$.
\end{Rk}

It will follow from theorem \ref{T5.1} that the relative Jacobian $J=S\times E^*$
is only a coarse moduli space under our assumption on $X$.
However, by property (iv) of the theorem one can find a system
of local universal sheaves which will form a twisted sheaf in the next 
section as in \cite{Ca1}, chapter 4. 

\end{section}
\vskip1cm

\begin{section}{The twisted universal sheaf}

In the following we replace the relative Jacobian $J$ by $S\times E$ via the
canonical isomorphism between  $ E$ and $ E^*$. Then the local 
trivilalizations $X_i\xra{\theta_i}S_i\times E$ are at the same time 
isomorphisms between $X_i$ and $J_i:= S_i\times E$. The local universal 
sheaves $\ku_i$ on $X_{iJ}= J\times_S X_i=J_i\times_{S_i} X_i$ are then given
as pull backs of the universal sheaf 
$\ko_{E\times E}(\Delta)\otimes p_2^*\ko_E(-p_0)$ for the classical Jacobian
of the elliptic curve $E$, after fixing an origin $p_0\in E$ 
and where $\Delta$ is the diagonal.

 Denoting by $\rho_i$ the composition of maps
$$ X_{iJ}\xra{id\times\theta_i} J\times_S(S_i\times E)\simeq 
S_i\times E\times E\to E\times E,$$
and by $p_X$ the projection from $X_{iJ}$ to $X_i$, the local universal sheaf 
becomes
$$ \ku_i = \rho_i^*(\ko_{E\times E}(\Delta)\otimes p_2^*\ko_E(-p_0))
\simeq\ko_{X{iJ}}(\Gamma_i)\otimes p_X^*\ko_{X_i}(-s_i),$$
where $\Gamma_i$ is the inverse of the diagonal (or the graph of the map 
$\theta_i$) and $s_i$ is the section of $X_i$ corresponding to the reference 
point $p_0$ under the isomorphism $\theta_i$, see \cite{Ca1}, prop. 4.2.3.

To measure the failure of these bundles to glue to a 
global universal one let us consider the line bundles 
$\cM_{ij}:=\cU_i\otimes\cU_j^{-1}$ 
over $J\times_S X_{ij}$. Then the restriction of $\cM_{ij}$ to a fibre $X_s$ 
of the projection $J\times_s X_i\xra{q_i} J$ is trivial because both $\ku_j$
and $\ku_i$ restrict to isomorphic sheaves. By Lemma \ref{lemma4} there are
invertible sheaves $\kf_{ij}$ on $J_{ij}=S_{ij}\times E$ such that 
 $\cM_{ij}=q_i^*\kf_{ij}$. 

This collection of line bundles satisfies the following properties:
\begin{itemize}
\item[1.] $\cF_{ii}=\cO_{J_i}$;
\item[2.] $\cF_{ji}=\cF_{ij}^{-1}$;
\item[3.] $\cF_{ij}\otimes\cF_{jk}\otimes\cF_{ki}=:\cF_{ijk}$ is trivial, 
with trivialization induced by the canonical one of 
$\cM_{ij}\otimes\cM_{jk}\otimes\cM_{ki}$;
\item[4.] $\cF_{ijk}\otimes\cF_{jkl}^{-1}\otimes\cF_{kli}\otimes\cF_{lij}^{-1}$
 is canonically trivial. 
\end{itemize}
   
These conditions tell us that the collection $\{\cF_{ij}\}$ represents a gerbe 
(see \cite{DP})and gives rise to an element $\alpha\in H^2(J, \ko_J^*).$ 
More explicitly,
$\alpha$ is defined as follows. We may assume that the sheaves $\kf_{ij}$ are
already trivial with trivializations $a_{ij}:\ko_J\simeq\kf_{ij}$ over 
$J_{ij}$. 

If $c_{ijk}:\ko_J\simeq\kf_{ijk}$ is the isomorphism
which is induced by the canonical
trivialization of $\cM_{ij}\otimes\cM_{jk}\otimes\cM_{ki}$, then
\begin{equation}\label{cocycle}
a_{ij}\otimes a_{jk}\otimes a_{ki}= \alpha_{ijk}c_{ijk}
\end{equation}
with scalar functions $\alpha_{ijk}$ which then define a cocycle
for the sheaf $\ko_J^*$, thus defining the class 
$\alpha\in H^2(J,\ko_J^*)$, see see \cite{Ca1}, sect. 4.3. It is 
straightforward to prove:
\begin{Lm}\label{gluing} The sheaves $\ku_i$ can be glued to a 
global universal sheaf if and only if the class $\alpha=0$. 
\end{Lm}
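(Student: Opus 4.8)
The plan is to translate the gluing problem into \v{C}ech language and to identify the obstruction with the cocycle $(\alpha_{ijk})$ constructed just above. Gluing the sheaves $\ku_i$ into a single global sheaf on $J\times_S X$ means exactly choosing isomorphisms $\psi_{ij}\colon\ku_j\xra{\sim}\ku_i$ over each overlap $J\times_S X_{ij}$ which satisfy the compatibility $\psi_{ij}\circ\psi_{jk}=\psi_{ik}$ on every triple overlap $J\times_S X_{ijk}$ (together with $\psi_{ii}=\id$ and $\psi_{ji}=\psi_{ij}^{-1}$). So the entire question reduces to deciding when such a compatible system exists.

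First I would describe these isomorphisms concretely. An isomorphism $\psi_{ij}\colon\ku_j\xra{\sim}\ku_i$ is the same datum as a nowhere-vanishing global section of $\mathcal{H}om(\ku_j,\ku_i)=\ku_i\otimes\ku_j^{-1}=\cM_{ij}=q_i^*\kf_{ij}$. The key observation is that, pushing forward along $q_i$ and using $q_{i\ast}\ko=\ko_J$ (the fibres of $q_i$ being copies of $E$, for which $H^0(E,\ko_E)=\bC$), one obtains $H^0(J\times_S X_{ij},\,q_i^*\kf_{ij})\cong H^0(J_{ij},\kf_{ij})$, an identification under which a section is nowhere vanishing upstairs precisely when it is so downstairs. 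Hence the isomorphisms $\psi_{ij}$ correspond bijectively to trivializations of $\kf_{ij}$, which—once the fixed trivializations $a_{ij}$ used to define $\alpha$ are in place—can be written as $b_{ij}=\mu_{ij}\,a_{ij}$ for nowhere-vanishing functions $\mu_{ij}$ on $J_{ij}$.

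Next I would rewrite the cocycle condition in these terms. Under the canonical identification $\cM_{ij}\otimes\cM_{jk}\cong\cM_{ik}$, the composite $\psi_{ij}\circ\psi_{jk}$ corresponds to the tensor product of the associated sections, so the condition $\psi_{ij}\circ\psi_{jk}=\psi_{ik}$ becomes $b_{ij}\otimes b_{jk}\otimes b_{ki}=c_{ijk}$, where $c_{ijk}$ is exactly the canonical trivialization of $\kf_{ijk}$ induced by that of $\cM_{ij}\otimes\cM_{jk}\otimes\cM_{ki}$. Substituting $b_{ij}=\mu_{ij}\,a_{ij}$ and invoking the defining relation (\ref{cocycle}), namely $a_{ij}\otimes a_{jk}\otimes a_{ki}=\alpha_{ijk}\,c_{ijk}$, this collapses to the scalar equation $\mu_{ij}\mu_{jk}\mu_{ki}\,\alpha_{ijk}=1$ on each $J_{ijk}$; that is, $\alpha_{ijk}=(\delta\mu)_{ijk}^{-1}$ exhibits $\alpha_{ijk}$ as a \v{C}ech coboundary for $\ko_J^*$.

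Finally I would conclude: a compatible system $(\psi_{ij})$ exists if and only if one can choose functions $\mu_{ij}\in H^0(J_{ij},\ko_J^*)$ solving $\mu_{ij}\mu_{jk}\mu_{ki}=\alpha_{ijk}^{-1}$, i.e. if and only if the cocycle $(\alpha_{ijk})$ is a coboundary, i.e. if and only if its class $\alpha\in H^2(J,\ko_J^*)$ vanishes. Since $\alpha$ is independent of the auxiliary choices $a_{ij}$ (a standard fact for the gerbe class), this is a well-posed criterion, and it proves the lemma. The only genuinely substantive step is the first—the reduction of fibrewise isomorphisms to trivializations of the $\kf_{ij}$ via $q_{i\ast}\ko=\ko_J$—after which everything is bookkeeping of the \v{C}ech differential.
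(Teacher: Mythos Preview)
Your proof is correct and is precisely the ``straightforward'' argument the paper alludes to without writing out: the paper simply states that the lemma is straightforward to prove and gives no details, so your \v{C}ech computation (reducing isomorphisms $\psi_{ij}$ to trivializations of $\kf_{ij}$ via $q_{i\ast}\ko=\ko_J$, then reading the cocycle condition against the defining relation (\ref{cocycle})) is exactly what is intended. There is nothing to compare; you have supplied the omitted verification.
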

The element $\alpha$ is related to the element $\xi\in H^1(S,\ko_S( E))$
which is defined by the cocycle of the elliptic bundle $X\to S$, using the
Ogg-Shafarevich group ${\cyr Sh}_S(J)$ of $J$, see \cite{Ca1}, section 4.4.
There is an exact sequence
$0\to\textrm{Br}(S)\to\textrm{Br}(J)\xra{\pi}{\cyr Sh}_S(J)\to 0,$
where $\textrm{Br}(S)\simeq H^2(J, \ko_J^*)$ is the analytic Brauer group of 
$S$ and ${\cyr Sh}_S(J)$ is isomorphic to $H^1(S,\ko_S( E))$ in our setting.
We have the 
 
\begin{Th}\label{T5.1}(\cite[Th 4.4.1]{Ca1}) $\xi = \pi(\alpha)$. \end{Th}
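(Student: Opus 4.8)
The plan is to verify the identity at the level of \v{C}ech cocycles for the trivializing cover $\{S_i\}$ of $S$, pulled back to the cover $\{J_i=S_i\times E\}$ of $J$, by first describing $\pi$ concretely as an edge homomorphism of a Leray spectral sequence and then matching the resulting $1$-cocycle on the base with the translation cocycle $\{\theta_{ij}\}$ defining $\xi$. The relevant exact sequence $0\to\textrm{Br}(S)\to\textrm{Br}(J)\xra{\pi}{\cyr Sh}_S(J)\to 0$ is the one coming from the Leray spectral sequence $E_2^{p,q}=H^p(S,R^q\mathrm{pr}_*\ko_J^*)\Rightarrow H^{p+q}(J,\ko_J^*)$ of the projection $\mathrm{pr}:J=S\times E\to S$. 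Here $\mathrm{pr}_*\ko_J^*=\ko_S^*$ yields $E_2^{2,0}=\textrm{Br}(S)$, the fibrewise Brauer group vanishes so that $E_2^{0,2}=0$ and the whole of $\textrm{Br}(J)$ maps to $E_\infty^{1,1}\subset H^1(S,R^1\mathrm{pr}_*\ko_J^*)$, and the connected component of $R^1\mathrm{pr}_*\ko_J^*$ is the sheaf $\ko_S(E)$ whose $H^1$ realizes ${\cyr Sh}_S(J)$. Thus $\pi$ is the composite of this edge map with the projection onto the degree-zero summand $H^1(S,\ko_S(E))$.

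The second step is to identify $\pi(\alpha)$ with the cocycle $\{[\kf_{ij}]\}$. By construction $\cM_{ij}=\ku_i\otimes\ku_j^{-1}=q_i^*\kf_{ij}$ is pulled back from $J_{ij}$, and the third of the listed properties of $\{\kf_{ij}\}$ (triviality of $\kf_{ij}\otimes\kf_{jk}\otimes\kf_{ki}$) says exactly that $\{[\kf_{ij}]\}$ is a $1$-cocycle for $R^1\mathrm{pr}_*\ko_J^*$, while the scalar discrepancy $\{\alpha_{ijk}\}$ of \eqref{cocycle} is the induced obstruction in $H^2(S,\ko_S^*)=\textrm{Br}(S)$. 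This is precisely the standard passage, built into the edge homomorphism, from a \v{C}ech $2$-cocycle for $\ko_J^*$ that is fibrewise trivial to a $1$-cocycle on the base valued in the relative Picard sheaf; hence $\pi(\alpha)$ is represented by $\{[\kf_{ij}]\}$. Because each $\cM_{ij}$ is fibrewise trivial, each $\kf_{ij}$ is fibrewise of degree $0$, so $\{[\kf_{ij}]\}$ already lies in the degree-zero summand $H^1(S,\ko_S(E))$.

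It remains to compute the class $[\kf_{ij}]$. The local universal sheaves are pulled back from $\ko_{E\times E}(\Delta)\otimes p_2^*\ko_E(-p_0)$ through the maps $\rho_i$ built from $\theta_i$, and passing from the $j$-th to the $i$-th trivialization translates the second ($E$-)coordinate by $\theta_{ij}$, since $\theta_i\circ\theta_j^{-1}=\mathrm{id}\times\theta_{ij}$. Writing $T_c$ for translation by $c$ in that factor, the seesaw principle together with the translation invariance of degree-zero line bundles on $E$ gives $T_c^*(\ko_{E\times E}(\Delta)\otimes p_2^*\ko_E(-p_0))\simeq(\ko_{E\times E}(\Delta)\otimes p_2^*\ko_E(-p_0))\otimes p_1^*M_c$, where $M_c\in\Pic^0(E)$ is the degree-zero bundle classified by $c$. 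Applying this with $c=\theta_{ij}$ shows $\kf_{ij}|_{\{s\}\times E}\simeq M_{\theta_{ij}(s)}$, so that $[\kf_{ij}]=\theta_{ij}$ as a section of $\ko_S(E)$. Since $\{\theta_{ij}\}$ is by definition the cocycle of the elliptic bundle $X\to S$, that is $\xi$, I obtain $\pi(\alpha)=\{[\kf_{ij}]\}=\{\theta_{ij}\}=\xi$.

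The step I expect to be the main obstacle is the middle one: verifying that the gerbe data $\{\alpha_{ijk}\}$ obtained by gluing the $\ku_i$ genuinely represents the abstract edge homomorphism, and that all the identifications in play---the canonical $E\simeq E^*$, the translation behaviour of the Poincar\'e bundle (including the correct sign and reference point $p_0$ in the classifying map $c\mapsto M_c$), and the chosen isomorphism ${\cyr Sh}_S(J)\simeq H^1(S,\ko_S(E))$---are mutually compatible, so that no spurious coboundary or inversion is introduced. Since the entire argument is local over $S$ and uses only that the fibre restrictions have degree $0$ (Corollary \ref{C1}), it is insensitive to the non-K\"ahler hypothesis and reduces to C\u ald\u araru's computation \cite[Th.\ 4.4.1]{Ca1}.
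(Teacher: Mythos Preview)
The paper does not actually prove this theorem: it is stated with a bare citation to \cite[Th.\ 4.4.1]{Ca1} and no argument of its own is given. Your proposal therefore goes well beyond what the paper supplies, and what you have written is a faithful sketch of C\u ald\u araru's original argument---realizing $\pi$ as the edge map of the Leray spectral sequence for $\ko_J^*$, identifying $\pi(\alpha)$ with the Picard-valued 1-cocycle $\{[\kf_{ij}]\}$, and computing that cocycle via the translation behaviour of the Poincar\'e bundle to recover $\{\theta_{ij}\}=\xi$. Since the paper itself defers entirely to C\u ald\u araru, your approach and the paper's are the same by construction; your final paragraph correctly flags the compatibility checks (signs, normalizations, the identification $E\simeq E^*$) that one must pin down to make the sketch rigorous, and these are exactly the details C\u ald\u araru works out in \cite[\S4.4]{Ca1}.
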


Because $\xi\not = 0$ in our case, $\alpha\not = 0$, and thus the local 
universal sheaves cannot be glued to a global universal sheaf by preserving
the bundle structure on the elliptic fibres.
\end{section}

\begin{section}{The twisted Fourier-Mukai transform}

The collection of local universal sheaves above can be considered 
as an $\alpha$-twisted sheaf with which one can define a Fourier-Mukai 
transform. Recall the definition of an $\alpha-$twisted sheaf on a complex
space or on an appropriate scheme $X$.
\begin{Def}
Let $\alpha \in C^2(\mathfrak{U},\cO_X^*)$ be a \v Cech 2-cocycle, given by an 
open cover $\mathfrak{U}=\{U_i\}_{i\in I}$ and
sections $\alpha_{ijk} \in \Gamma(U_i \cap U_j \cap U_k, \cO_X^*)$. 
An $\alpha$-twisted sheaf on $X$ will be a pair of families
$(\{\cF_i\}_{i\in I}, \{\varphi_{ij}\}_{i,j,\in I})$ with $\cF_i$ a sheaf of 
$\cO_X-$modules on $U_i$ and
$\varphi_{ij}:\cF_j|_{U_i \cap U_j} \to \cF_i |_{U_i \cap U_j}$  
isomorphisms such that
\begin{itemize}
\item $\varphi_{ii}$ is the identity for all $i \in I$.
\item $\varphi_{ij}=\varphi_{ji}^{-1}$ , for all $i,j \in I$.
\item $\varphi_{ij} \circ \varphi_{jk} \circ \varphi_{kl}$ is multiplication 
by $\alpha_{ijk}$ on $\cF_i |_{U_i \cap U_j \cap U_k}$ for all $i,j,k \in I$.
\end{itemize}
\end{Def}
It is easy to see that the coherent $\alpha$-twisted sheaves on X
make up an abelian category and thus give rise to a derived category
$\cD^\flat(X,\alpha).$  For further properties of $\alpha$-twisted 
sheaves, see \cite{Ca1}.

With the notation above, the family $(\ku_i)$ becomes a twisted sheaf $\ku$
w.r.t. the cocycle $p_J^*\alpha$ of the sheaf $\ko_{J\times_S X}^*$ as follows.
The trivializations $a_{ij}$ of the $\cF_{ij}$ induce isomorphisms 
$\phi_{ij}: \ku_j\simeq\ku_i$ which satisfy the definition of a twisted
sheaf because of identity \ref{cocycle}. We also need the dual $\kv$ of $\ku$
on $J\times_S X$ which locally over $S_i$ is given by 
$$ \cV_i = \rho_i^*(\ko_{E\times E}(-\Delta)\otimes p_2^*\ko_E(p_0))
\simeq\ko_{X{iJ}}(-\Gamma_i)\otimes p_X^*\ko_{X_i}(s_i).$$
It follows that $\kv_i$ is $\alpha^{-1}$-twisted. We let $\kv^0$ and $\ku^0$
denote the extensions of $\kv$ and $\ku$ to $J\times X$ by zero.

The following theorem supplies us with the main tool for the treatment
of the moduli spaces $M_X(n,0)$ of relatively semistable vector bundles on X
of rank $n$ and degree 0 on the fibres $X_s$ in section 5. 
It is an analog of theorem  \cite[Th.6.5.4]{Ca1}(and also \cite{Ca2}):
\begin{Th}
Let $X \xrightarrow{\pi}S$ be an elliptic principal fiber bundle. 
Let $\alpha \in \textrm{Br}(J)$ be the obstruction to 
the existence of the universal sheaf on $J\times_S X$ and let $\cU$ be the 
associated $p_J^*(\alpha)$-twisted universal sheaf on $J\times_S X$ 
with its dual $\kv$ as above.
 
\noindent Then the twisted Fourier-Mukai transform 
$\Psi: \cD^\flat(J,\alpha) \to \cD^\flat(X) $, given by 
$ \Psi(\cF):=Rp_{X*}(\cV^0 \otimes^L L{p_J}^*\cF)$ is an equivalence 
of categories, where $p_J$ and $p_X$ are the product projections
\begin{equation}
\xymatrix{J & J\times X\ar[l]_{p_J}\ar[r]^{p_X} & X}
\end{equation}
\end{Th}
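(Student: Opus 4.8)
The plan is to reduce the twisted statement to the well-established untwisted Fourier--Mukai equivalence on elliptic curves, exploiting the local triviality of $\pi:X\to S$. The key structural fact is that over each trivializing open set $S_i\subset S$ we have $X_i\simeq S_i\times E$ and $J_i\simeq S_i\times E$, and under these identifications the twisted universal sheaf $\ku$ restricts to the classical Poincar\'e kernel $\ko_{E\times E}(\Delta)\otimes p_2^*\ko_E(-p_0)$ (pulled back along $\rho_i$), whose associated integral transform is the classical relative Fourier--Mukai transform. Since the classical transform for a single elliptic curve, and hence the relative one over the trivializing base $S_i$, is known to be an equivalence (this is the content of the untwisted \cite[Th.6.5.4]{Ca1}), the transform $\Psi$ is an equivalence locally over $S$.

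First I would make precise the sense in which $\Psi$ is ``locally'' the classical transform. On $J_i\times_{S_i}X_i\simeq S_i\times E\times E$ the kernel $\cV_i^0$ is exactly the dual Poincar\'e sheaf extended by zero, and base change for the proper projections $p_J,p_X$ shows that $\Psi$ restricted to $\cD^\flat(J_i,\alpha)$ agrees with the relative transform built from $\cV_i$. The twisting here is genuinely local: the cocycle $p_J^*\alpha$ is trivializable over each $S_i$ via the $a_{ij}$, so on each piece $\cD^\flat(J_i,\alpha)\simeq\cD^\flat(J_i)$ and the transform is the untwisted one. Thus I obtain an inverse $\Phi$ locally, namely $\Phi_i(\cG):=Rp_{J*}(\cU_i^0\otimes^L Lp_X^*\cG)[1]$ on each patch, which inverts $\Psi_i$ up to the usual shift by the standard computation $R\mathcal{H}om$ of the Poincar\'e kernel with its dual.

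The main work, and the step I expect to be the principal obstacle, is to glue these local inverses into a global twisted inverse $\Phi:\cD^\flat(X)\to\cD^\flat(J,\alpha)$ and to check that $\Phi\circ\Psi$ and $\Psi\circ\Phi$ are isomorphic to the identity \emph{as functors of twisted derived categories}, not merely object-by-object on each patch. Here the transition isomorphisms $\phi_{ij}:\ku_j\simeq\ku_i$ induced by the trivializations $a_{ij}$, which by identity \eqref{cocycle} fail to compose exactly and instead pick up the scalars $\alpha_{ijk}$, are precisely what make $\Psi$ land in the \emph{untwisted} category $\cD^\flat(X)$ while its source is $\alpha$-twisted; one must verify that the composite kernel $\cU^0\otimes^L\cV^0$ carries trivial total twisting (the $\alpha$ from $\cU$ cancels the $\alpha^{-1}$ from $\cV$) so that the convolution of the two transforms is a genuine untwisted endofunctor. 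Concretely I would compute the convolution kernel $R p_{13*}(p_{12}^*\cV^0\otimes^L p_{23}^*\cU^0)$ on $X\times_S X$ (resp.\ on $J\times_S J$), show using the fibrewise classical result that each restriction to a fibre is the structure sheaf of the diagonal (up to shift), and conclude that the kernel is supported on and is the identity along the relative diagonal.

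Finally I would assemble the local data: the fibrewise identification of convolution kernels with the diagonal guarantees that $\Phi\circ\Psi\simeq\id$ and $\Psi\circ\Phi\simeq\id$ after restriction to each $S_i$, and since these isomorphisms are canonical (coming from the unique fibrewise trace/evaluation maps of the Poincar\'e bundle) they agree on overlaps $S_{ij}$ and glue, the twisting scalars $\alpha_{ijk}$ being exactly absorbed by the cancellation noted above. This yields that $\Psi$ is an equivalence globally. The delicate point throughout is bookkeeping of the twist: one must keep track of whether each sheaf is $\alpha$- or $\alpha^{-1}$-twisted and confirm that every derived pushforward and tensor operation respects the twisted structure, so that the final equivalence is between $\cD^\flat(J,\alpha)$ and the honest $\cD^\flat(X)$ as claimed.
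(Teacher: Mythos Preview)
Your approach is genuinely different from the paper's. The paper does not attempt to glue local equivalences or compute convolution kernels; instead it invokes C\u ald\u araru's twisted version of the Bridgeland--Orlov criterion \cite[Th.~3.2.1]{Ca1}: $\Psi$ is fully faithful iff for all points $y,y_1,y_2\in J$ one has $\Hom(\Psi(\bC(y)),\Psi(\bC(y)))=\bC$ and $\Ext^i(\Psi(\bC(y_1)),\Psi(\bC(y_2)))=0$ unless $y_1=y_2$ and $0\le i\le\dim J$; and $\Psi$ is an equivalence iff in addition $\Psi(\bC(y))\otimes\omega_X\simeq\Psi(\bC(y))$. The paper then computes $\Psi(\bC(y))\simeq i_{s*}\cO_{X_s}(-x+s_i(s))$ directly (a degree-zero line bundle on a single fibre, pushed forward), from which the Hom/Ext conditions are immediate, and the $\omega_X$ condition is free because $\omega_X$ is trivial. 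This reduces the whole theorem to a pointwise computation and the cited criterion; no gluing of functors or natural transformations is needed.

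Your route via convolution kernels is in principle sound, and it has the appeal of not relying on the criterion (whose applicability here, as the paper notes, requires C\u ald\u araru's private communication that it extends to non-torsion $\alpha$). However, you should be careful to keep the argument at the level of a \emph{global} kernel computation rather than ``gluing local equivalences'': derived categories do not form a stack, so the sentence ``the local isomorphisms $\Phi_i\circ\Psi_i\simeq\id$ are canonical and hence glue over $S_{ij}$'' is not by itself a proof. The robust version of your idea is to form the global convolution kernel on $X\times X$ (resp.\ on $J\times J$, with the appropriate $p_1^*\alpha^{-1}\cdot p_2^*\alpha$-twist), observe that it is a bona fide twisted sheaf because the twists of $\cU$ and $\cV$ cancel as you say, and then prove it is the shifted structure sheaf of the relative diagonal by a base-change argument to each $S_i$ (where the question becomes the classical Mukai computation for $E$). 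That is a statement about sheaves, which \emph{is} local, and it yields the equivalence directly. Compared to the paper's proof, this buys independence from the Bridgeland criterion at the cost of more bookkeeping with twisted kernels and base change.
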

Note here that $\cV^0 \otimes^L L{p_J}^*\cF)$ is an object
in the category of untwisted sheaves.

\begin{proof} The theorem follows from the C\u ald\u araru's version of the 
Bridgeland (Orlov, Mukai, etc.)-criterion 
(\cite{O},\cite{Br1}, \cite[Th.3.2.1]{Ca1}), applied to our case. Due to a private
communication this criterion works also in the case 
when $\alpha$ is not torsion\footnote{The authors are indebted to Andrei 
C\u ald\u araru for this information.}). It follows that
the functor $\Psi$ is fully faithful if and only if for each point 
$y \in J$ and its skyscraper sheaf $\bC(y)$, $\textrm{Hom}(\Psi (\bC(y))$,
 $\Psi (\bC(y)))=\bC$ and for any $y_1,y_2 \in J$ , 
$\textrm{Ext}^i(\Psi (\bC(y_1))$, $\Psi (\bC(y_2)))=0$, 
unless $y_1=y_2$ and $0 \leq i \leq \textrm{dim}(J)$. Moreover $\Psi$ is an 
equivalence of categories if and only if for any $y \in J$ we have 
$\Psi (\bC(y)) \otimes \omega_X \simeq \Psi (\bC(y))$. 
Note that in our case the canonical bundle is trivial so that the last 
condition is automatically satisfied.

\noindent In order to compute $\Psi(\bC(y)))$ for a point $y\in J$, let
$s\in S_i$ be the image in $S$ and consider $\kv^0\otimes p_J^*\bC(y)$. 
Its support is $(J\times_S X)\cap (\{y\}\times X)= \{y\}\times X_s$.
We may therefore identify $\kv$ with $\kv_i$ and obtain 
$\kv^0|\{y\}\times X\simeq\cO_{X_s}(-x+s_i(s))$, where $\theta_i(x)=y$ 
and $s_i$ denotes the 
local section of $X$ corresponding to $p_0$. 
Because $p_J^*\bC(y)= \cO_{y\times X_s},$ we obtain 
$\Psi(\bC(y)))\simeq p_{X*}\cO_{X_s}(-x+s_i(s))
\simeq i_{s*}\cO_{X_s}(-x+s_i(s))$
by the base change isomorphism for the inclusion $i_s$ which holds in this case
because $X_s$ is smooth as is the projection $p_X$, see \cite[Lemma 1.3]{BoO}
Using this, we conclude that 
$\Hom(\Psi (\bC(y)),\Psi (\bC(y)))=\bC$ and we proceed in the same 
way for $\Ext$.
\end{proof}

\begin{Rk}
        On can see this result in connection with section 6 of \cite{KO},
        since the element $\alpha\in \textrm{Br}(J)$ is not torsion.
\end{Rk}
In the sequel we shall work with the adjoint transform
$$\Phi(-)=Rp_{J*}(\cU^0 \otimes^L Lp_X^*(-))$$
 of $\Psi$, with kernel $\cU^0.$
It is the reverse equivalence, see \cite[8.4]{BM}, \cite{Hu}, \cite{BBH}
for the untwisted situation.

We need the following special cases of base change properties.
\begin{Pro}\label{bch}
For any $s\in S$ let $i_s:X_s \to X$ and $j_s:\{s\}\times E \to J$ be the 
natural inclusions. Then the canonical morphism of functors
\begin{equation}
\mathbf{L}j^*_s \circ \Phi \simeq \Phi_s \circ \mathbf{L}i_s^*,
\end{equation} is an isomorphism, 
where $\Phi_s$ is the classical Fourier-Mukai transform associated with the 
Poincar\`e bundle over $ E\times E$.
\end{Pro}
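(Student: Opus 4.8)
The plan is to realize both sides as kernel transforms and to deduce the comparison from flat base change along the fibre $\{s\}\times E\hookrightarrow J$, once the kernel has been put in a convenient form. Throughout write $A:=J\times_S X$ with its two projections $q_J:A\to J$ and $q_X:A\to X$, and let $\iota:A\hookrightarrow J\times X$ be the closed immersion with respect to which $\cU^0=\iota_*\cU$. Since $\mathbf{L}\iota^* Lp_X^*\cF\simeq Lq_X^*\cF$ (because $p_X\circ\iota=q_X$) and $p_J\circ\iota=q_J$, the projection formula for $\iota$ gives the simplification $\Phi(\cF)\simeq Rq_{J*}(\cU\otimes^L Lq_X^*\cF)$, so that we may work with the honest (locally free) kernel $\cU$ on $A$ rather than with its extension by zero.

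First I would set up the base change square. The fibre of $q_J$ over $j_s:\{s\}\times E\hookrightarrow J$ is $A_s:=(\{s\}\times E)\times X_s$, and writing $\tilde\jmath_s:A_s\hookrightarrow A$ for the inclusion, $q_J^s:A_s\to\{s\}\times E$ and $q_X^s:A_s\to X_s$ for the two projections, one has a Cartesian square with vertical maps $q_J$ and $q_J^s$ and horizontal maps $j_s$ and $\tilde\jmath_s$. Because $\pi$ is a holomorphic fibre bundle with compact fibre $E$, its base change $q_J:A\to J$ is proper and flat; hence the square is Tor-independent and derived base change applies, yielding $\mathbf{L}j_s^*\Phi(\cF)\simeq Rq^s_{J*}\,\mathbf{L}\tilde\jmath_s^*(\cU\otimes^L Lq_X^*\cF)$. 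This is the step carrying the real content.

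It then remains to identify the restricted kernel and input. Using $q_X\circ\tilde\jmath_s=i_s\circ q_X^s$ one gets $\mathbf{L}\tilde\jmath_s^* Lq_X^*\cF\simeq L(q_X^s)^*\mathbf{L}i_s^*\cF$, while $\mathbf{L}\tilde\jmath_s^*\cU\simeq\cP$: on a trivializing neighbourhood $S_i\ni s$ we have $\cU=\cU_i=\rho_i^*(\cO_{E\times E}(\Delta)\otimes p_2^*\cO_E(-p_0))$, and under the identifications $\{s\}\times E\cong E$, $X_s\cong E$ induced by $\theta_i$ the map $\rho_i$ restricts to the identity of $E\times E$, so the restriction is the Poincar\'e bundle $\cP$; since $\cU$ is invertible no higher Tor sheaves intervene and $\mathbf{L}\tilde\jmath_s^*\cU=\tilde\jmath_s^*\cU$. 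Substituting and using that $q_J^s,q_X^s$ are precisely the two projections of $A_s\cong E\times E$, I obtain $\mathbf{L}j_s^*\Phi(\cF)\simeq Rq^s_{J*}(\cP\otimes^L L(q_X^s)^*\mathbf{L}i_s^*\cF)=\Phi_s(\mathbf{L}i_s^*\cF)$, and naturality of the isomorphism is built into the construction.

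The hard part will be the justification of the base change isomorphism in the present twisted, analytic setting: one must check properness and flatness of $q_J$ (equivalently Tor-independence of the square) so that $\mathbf{L}j_s^* Rq_{J*}\simeq Rq^s_{J*}\mathbf{L}\tilde\jmath_s^*$, and verify that the twisting cocycle $\alpha$ becomes trivial upon restriction to the single fibre over $s$, so that $\mathbf{L}\tilde\jmath_s^*\cU$ is a genuine (untwisted) sheaf canonically identified with $\cP$. Once these two points are secured, the remaining identifications are formal.
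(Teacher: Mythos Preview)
Your argument is correct and follows essentially the same route as the paper: set up the Cartesian square over the fibre $\{s\}\times E\hookrightarrow J$, invoke flat base change for the proper flat map $q_J$ (the paper cites \cite[Lemma~1.3]{BoO} for this), and then identify the restriction of the kernel with the Poincar\'e bundle. Your version is in fact more carefully written than the paper's brief sketch: you make explicit the preliminary reduction from $\cU^0$ on $J\times X$ to $\cU$ on $J\times_S X$ via the projection formula, and you flag the point that the twisting trivializes on a single fibre so that the restricted kernel is honestly untwisted.
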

\begin{proof}Let $\tilde{j_s}$ denote the inclusion of $J_s\times X_s$ 
into $J\times_S X$ and let $p_{J_s}$ be the first projection of $J_s\times X_s$.
By \cite[Lemma 1.3]{BoO} $Lj_s^* p_{J*}= Rp_{J_s*}\tilde{j_s}^*$. Then 
$$Lj_s^*p_{J*}(\cV^0\otimes p_X^*\kf)\simeq 
Rp_{J_s*}\tilde{j_s}^*\cV^0\otimes p_X^*\kf\simeq 
(\cV|J_s\times X_s)\otimes p_{X_s}^*i_s^*\kf,$$
which implies the  formula. 
\end{proof}

The following definition is very usefull for dealing with the spectral
covers in the next section. 
\begin{Def}\label{defwit}(\cite{Muk})
We denote by $\Phi^i(\cF)$ the $i$-th term of the complex $\Phi(\cF)$. We say 
that the sheaf $\cF$ is $\Phi-$WIT$_i$ (the weak index theorem holds) if 
$\Phi^i(\cF) \neq 0$ and $\Phi^j(\cF) = 0$ for any $j \neq i$. Moreover if 
$\cF$ is WIT$_i$ and $\Phi^i(\cF)$ is locally 
free we say that $\cF$ is IT$_i$. 
\end{Def}

Consider now a rank $n$ vector-bundle $\cF$ over the principal elliptic bundle 
$X$ and denote its restriction to a fibre $X_s$ by $\cF_s.$ 
From Proposition \ref{bch} it follows that if 
$\cF_s$ is $\Phi_s$-WIT$_i$ for any $s$ then $\cF$ is $\Phi-$WIT$_i$.
\end{section}

\begin{section}{A spectral cover and vector bundles on $X$}

In this section we shall apply the twisted Fourier-Mukai transform 
to the moduli problem for rank-$n$ relatively semi-stable vector bundles on 
the principal elliptic bundle $X$. By Deligne's theorem (Theorem \ref{D}), 
the degree of the restriction $\cF_s$ of any vector 
bundle $\cF$ on $X$ is $0$ for any $s\in S$. Therefore we consider the set 
$MS_X(n,0)$ of rank-$n$ vector bundles on $X$ which are fibrewise semistable 
and of degree zero, together with its quotient 
$$M_X(n,0):= MS_X(n,0)/\sim $$
of equivalence classes, where two bundles are defined to be equivalent if
they are fibrewise S-equivalent.

Let us recall that a vector bundle $\ke$ on a smooth projective curve
is called semistable if for any proper subbundle $\ke'$, 
$$\deg(\ke')/\rank(\ke')\leq \deg(\ke)/\rank(\ke).$$
 For such bundles 
there is the standard notion of S-equivalence, see e.g. \cite{HL}.

It is well-known that the semistable vector bundles of degree zero on the 
elliptic curve $E$ are direct sums
\begin{equation}\label{atiyah}
\ke= \bigoplus_i\ka_{n_i}\otimes\ko_E(x_i-p_0),
\end{equation}
where the $\ka_n$ denote the indecomposable Atiyah bundles of degree zero
which are inductively defined by nontrivial extensions
$0\to\ko_E\to\ka_n\to\ka_{n-1}\to 0$ with $\ka_1=\ko_E$, see \cite{At}
\cite[Def. 1.12]{FMW} or \cite{T}. It follows that each such $\ke$ is 
S-equivalent to a direct sum $gr(\ke)= \oplus_j \ko_E(y_j-p_0)^{\oplus m_j}$
with pairwise distinct points $y_j$.

\begin{Pro}\label{wit1} (\cite{HM}, \cite{BBRP})
Let $\cF$ be a member of $MS_X(n,0).$ Then\\
(i) $\cF$ is $\Phi$-WIT$_1$.\\
(ii) for any $s\in S$ with $\cF_s$ as in \ref{atiyah}, the sheaf 
 $\Phi^1(\cF_s)$ is a skyscraper sheaf $\oplus_j\kc_j$ with
$\Supp(\kc_j)=\{-y_j\},$ (the point of the dual bundle 
$\ko_E(-y_j+p_0)\simeq\ko_E([-y_j]-p_0)$, $[-y_j]$ denoting the divisor of 
$-y_j\in E$) and $\length(\kc_j)=m_j$.
\end{Pro}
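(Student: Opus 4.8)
The plan is to reduce the whole statement to the classical Fourier--Mukai transform on a single elliptic curve and then compute on the standard building blocks. First I would invoke the base-change isomorphism of Proposition \ref{bch}: since $\cF$ is locally free, $\mathbf{L}i_s^*\cF=\cF_s$, so $\mathbf{L}j_s^*\Phi(\cF)\simeq\Phi_s(\cF_s)$, where $\Phi_s$ is the classical transform with Poincar\'e kernel on $E\times E$. By the remark closing Section 4, if each $\cF_s$ is $\Phi_s$-WIT$_1$ then $\cF$ is $\Phi$-WIT$_1$, which is (i); and $\Phi_s^1(\cF_s)$ then computes the fibre of $\Phi^1(\cF)$, which is (ii). Thus both parts become a purely fibrewise assertion about $\Phi_s$.

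Next I would treat the elementary summands on $E$. I would recall the classical facts that a degree-$0$ line bundle $\ko_E(y-p_0)$ is $\Phi_s$-WIT$_1$ with $\Phi_s^1$ the length-one skyscraper supported at the dual point $-y$, so in particular $\Phi_s^1(\ko_E)$ is the skyscraper at the origin. For the Atiyah bundles I would argue by induction on $n$ using the defining extension $0\to\ko_E\to\ka_n\to\ka_{n-1}\to 0$. Applying the derived functor $\Phi_s$ and writing out the long exact sequence of cohomology sheaves, the vanishing $\Phi_s^0(\ko_E)=\Phi_s^0(\ka_{n-1})=0$ forces $\Phi_s^0(\ka_n)=0$ and leaves $0\to\Phi_s^1(\ko_E)\to\Phi_s^1(\ka_n)\to\Phi_s^1(\ka_{n-1})\to 0$; hence $\ka_n$ is $\Phi_s$-WIT$_1$ and $\Phi_s^1(\ka_n)$ is a length-$n$ skyscraper at the origin. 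Finally, since twisting by the degree-$0$ bundle $\ko_E(x-p_0)$ intertwines with translation by the corresponding point under $\Phi_s$, the bundle $\ka_{n_i}\otimes\ko_E(x_i-p_0)$ is $\Phi_s$-WIT$_1$ with $\Phi_s^1$ a length-$n_i$ skyscraper supported at $-x_i$.

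To assemble, for $\cF_s=\bigoplus_i\ka_{n_i}\otimes\ko_E(x_i-p_0)$ the additivity of $\Phi_s$ shows immediately that $\cF_s$ is $\Phi_s$-WIT$_1$ and that $\Phi_s^1(\cF_s)$ is a direct sum of skyscrapers. Passing to the S-equivalent normal form $\oplus_j\ko_E(y_j-p_0)^{\oplus m_j}$ with pairwise distinct $y_j$, so that $m_j=\sum_{i:\,x_i=y_j}n_i$, and collecting the summands by support, the part supported at $-y_j$ has total length $\sum_{i:\,x_i=y_j}n_i=m_j$. This yields $\Phi_s^1(\cF_s)=\oplus_j\kc_j$ with $\Supp(\kc_j)=\{-y_j\}$ and $\length(\kc_j)=m_j$, which together with the first paragraph establishes both (i) and (ii).

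The main obstacle is the fibrewise classical input rather than the final bookkeeping: the delicate point is establishing the WIT$_1$ property together with the precise support for degree-$0$ line bundles and, via the inductive extension argument, for the Atiyah bundles, keeping the sign correct so that $\ko_E(y-p_0)$ transforms to a skyscraper at $-y$ and verifying that the twist-versus-translation compatibility tracks the support faithfully. Once these statements on $E$ are secured, the reduction through Proposition \ref{bch} and the direct-sum accounting are formal.
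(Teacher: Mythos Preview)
Your proposal is correct and follows essentially the same route as the paper: reduce to the fibrewise situation via the base change of Proposition~\ref{bch} and then compute the classical Fourier--Mukai transform on $E$. The only difference is that for (i) the paper simply invokes \cite{BBRP}, Prop.~2.7 and Cor.~2.12, in the relative setting, whereas you pass to the fibres via the remark closing Section~4 and then prove $\Phi_s$-WIT$_1$ directly by the inductive argument on the Atiyah extensions; this is a more self-contained and elementary substitute for the citation, and your bookkeeping of supports and lengths in (ii) is exactly the ``direct computation'' the paper alludes to.
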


\begin{proof} The first part follows from
\cite{BBRP} Prop. 2.7 and Cor. 2.12. The second part follows by
direct computation and the base change property Proposition \ref{bch}.
\end{proof}

\begin{Rk}
        The proof shows that the sheaves $\Phi^1(\cF_s)$ and 
        $\Phi^1(\gr(\cF_s))$ are the same.
\end{Rk}

\begin{Rk}
The condition for $\cF_s$ to be $\Phi_s$-WIT$_1$ is even equivalent  
for $\cF_s$ to be of degree $0$ and semi-stable, see \cite{HM}.
\end{Rk}

For a $\Phi$-WIT$_1$-sheaf $\cF$ on $X$, we define the  
{\bf spectral cover} of $\cF$ as follows. 

\begin{Def}
Let $\cF$ be a WIT$_1$ sheaf on $X$. The spectral cover $C(\cF)$ of $\cF$ is 
the $0-$th Fitting subscheme of $J$ given by 
the Fitting ideal sheaf $Fitt_0(\Phi^1(\cF))$ of $\Phi^1(\cF)$.
\end{Def}
Because we work over a non-algebraic manifold and because the image of $\Phi$ 
is not in the derived category of 
coherent shaves, but in that of {\it twisted} sheaves, we need to prove that 
the Fitting scheme is well-defined in our case. But this follows from
the fact that the Fitting ideals are independent of the finite presentation
of the local sheaves $\kf_i$ of an $\alpha-$sheaf, see \cite[20.4]{E}.
Thus we have well-defined sheaves of ideals $\Fitt_l(\cF)$ 
given locally by the ideal sheaves $I_{p_i-l}(\kf_i)$ of minors of size
$p_i-l$ of the matrix $F_i$ of a local presentation
$\ko_J^{q_i}\xra{F_i}\ko_J^{p_i}\to\kf_i\to 0$ over the open set $U_i$.
This sheaf gives us an analytic subspace $V(\Fitt_l(\cF))$ called the $l$-th 
Fitting scheme by abuse of notation in the analytic category. 

By Proposition \ref{wit1} (ii), for a single fiber $X_s$, we are given a map 
$M_{X_s}(n,0)$ $\to\textrm{Sym}^n J_s$ 
from the moduli space of semistable vector bundles of rank $n$ and 
degree $0$ to the $n-$th symmetric power of the torus $J_s=\{s\}\times E$, 
defined by
$$\cF_s \mapsto \Sigma_j\; m_j(s,-y_j).$$

In this way we obtain a map from $M_X(n,0)$ to $S\times \Sym^nE$, where
$\Sym^nE:= E^n/{\mathfrak S}_n $ is the $n$-th symmetric power of $E$ as
the quotient of $E^n$ by the symmetric group ${\mathfrak S}_n$. Then
$S\times \Sym^nE$ is a complex manifold of dimension $n+2$ and can 
be thought of as the relative space of cycles of degree n in $E$.
We will show that this map is part of a transformation of functors with target 
$\Hom_S(-,S\times \Sym^nE)$ and that $S\times \Sym^nE$ 
corepresents the moduli functor $\km_X(n,0)$ for $M_X(n,0)$ defined
as follows.

For any complex space $T$ over $S$ let the set
$\cM_X(n,0)(T)$ be defined by
$$\cM_X(n,0)(T):=\cM\cS_X(n,0)(T)/\sim,$$
where $\cM\cS_X(n,0)(T)$ is the set of vector bundles on $X_T$ of rank $n$ 
and fibre degree 0, and where the equivalence relation $\cF\sim\cG$ is defined 
by $S-$equivalence
of the restricted sheaves $\cF_t$ and $\cG_t$ on the fibres $X_{Tt}$.
The functor property is then defined via pull backs.

We are going to describe the spectral cover as a functor below. For that let
$T\to S$ be a complex space over $S$ and let $\Phi_T$ be the Fourier-Mukai
transform for the product $J_T\times X_T$ with the pull back $\ku_T$ 
of $\ku$ as kernel. By \cite{BBRP}, Prop. 2.7 and Cor. 2.12, 
any bundle $\kf_T$ in $\cM\cS_X(n,0)(T)$ is also $\Phi_T-WIT_1$ and admits
a spectral cover $C(\kf_T)\subset T\times E$ defined by the Fitting ideal
$Fitt_0\Phi^1_T(\kf_T)$. 

\begin{Lm}\label{fl} If $T$ is reduced, then $C(\kf_T)$ is flat over $T$.
\end{Lm}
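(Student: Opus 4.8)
The plan is to reduce the assertion to the standard criterion that a finite morphism onto a reduced base is flat as soon as its fibre length is locally constant, and then to verify that every fibre of $C(\kf_T)\to T$ has length exactly $n$. Throughout I write $p\colon C(\kf_T)\to T$ for the projection and, for $t\in T$, I let $j_t\colon\{t\}\times E\hookrightarrow J_T$ and $i_t\colon X_{Tt}\hookrightarrow X_T$ denote the fibre inclusions.

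First I would show that $p$ is finite. By construction $C(\kf_T)$ is the closed analytic subspace $V(\Fitt_0\Phi^1_T(\kf_T))$ of $J_T=T\times E$, hence it is proper over $T$ because $E$ is compact. To see that its fibres are zero-dimensional I would identify them scheme-theoretically. Since Fitting ideals commute with arbitrary base change, the scheme-theoretic fibre of $C(\kf_T)$ over $t$ is $V(\Fitt_0(j_t^*\Phi^1_T(\kf_T)))$. Now the same base-change argument as in Proposition \ref{bch}, applied over $T$, gives $\mathbf{L}j_t^*\Phi_T(\kf_T)\simeq\Phi_t(\mathbf{L}i_t^*\kf_T)=\Phi_t(\kf_t)$; since $\kf_T$ is $\Phi_T$-WIT$_1$ and each $\kf_t$ is $\Phi_t$-WIT$_1$ by Proposition \ref{wit1}(i), both sides are concentrated in degree $1$, which forces $\mathbf{L}j_t^*\Phi^1_T(\kf_T)$ to sit in degree $0$ and equal $\Phi^1_t(\kf_t)$. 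Thus the fibre of $C(\kf_T)$ over $t$ is exactly the spectral cover $C(\kf_t)=V(\Fitt_0(\Phi^1_t(\kf_t)))$, which is zero-dimensional by Proposition \ref{wit1}(ii). A proper map with finite fibres is finite, so $p$ is finite.

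Next I would compute the fibre length. By Proposition \ref{wit1}(ii) the sheaf $\Phi^1_t(\kf_t)=\oplus_j\kc_j$ is a skyscraper of total length $\sum_j m_j=n$, the rank of $\kf_t$. On the \emph{smooth} curve $E$ the local rings are discrete valuation rings, so a presentation matrix of a finite-length module can be put in Smith normal form; consequently $\Fitt_0$ of such a module cuts out a subscheme whose length equals the length of the module. Hence $\length(\ko_{C(\kf_t)})=\length(\Phi^1_t(\kf_t))=n$ for every $t\in T$. Since for a finite morphism the fibre length equals $\dim_{k(t)}(p_*\ko_{C(\kf_T)}\otimes k(t))$, this dimension is the constant function $n$ on $T$.

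Finally, with $T$ reduced, constancy of $t\mapsto\dim_{k(t)}(p_*\ko_{C(\kf_T)}\otimes k(t))$ implies that the coherent sheaf $p_*\ko_{C(\kf_T)}$ is locally free, i.e.\ $C(\kf_T)\to T$ is flat; the reducedness of $T$ enters precisely in this passage from constant fibre dimension to local freeness. I expect the main obstacle to be the length bookkeeping of the previous paragraph: one must know that passing to the Fitting scheme does not change the length, and this is exactly where the smoothness of the fibre $E$ (so that its local rings are principal) is indispensable, since for higher-dimensional fibres the Fitting length can strictly exceed the module length and the argument would break.
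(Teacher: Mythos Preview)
Your proof is correct and follows the same line as the paper's: establish that every fibre of $C(\kf_T)\to T$ is zero-dimensional of constant length $n$, then appeal to the flatness criterion over a reduced base. The paper's version is much terser---it simply asserts the constant fibre length ``as in the case of $S$ above'' and then invokes Douady's criterion (surjectivity plus constant fibre length over a reduced base implies flatness)---whereas you spell out the base-change identification of the fibre with $C(\kf_t)$ and give the Smith-normal-form argument on the DVRs of $E$ to match the Fitting length with the module length; this extra detail is sound and is exactly what the paper's ``as in the case of $S$ above'' is pointing to.
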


\begin{proof}  The fibres of $C(\kf_T)$ are finite of constant lenght $n$
as in the case of $S$ above. Because the projection to $T$ is surjective,
flatness follows from Douady's criterion in \cite{D}.
\end{proof}

\begin{Lm}\label{func} The spectral cover is compatible with base change:
For any morhism\\ $h:T^{'}\to T$ over $S$ and any bundle 
$\kf_T$ in $\cM\cS_X(n,0)(T)$, 
$$h^*C(\kf_T)\simeq C(h^*\kf_T)$$ 
\end{Lm}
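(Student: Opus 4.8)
The plan is to reduce the assertion to two facts that hold locally on $J_T$: that zeroth Fitting ideals commute with base change, and that the relative twisted Fourier--Mukai transform commutes with base change along $h$. By definition $C(\kf_T)=V(\Fitt_0(\Phi^1_T(\kf_T)))\subset J_T$, so $h^*C(\kf_T)$ is cut out in $J_{T'}$ by the ideal generated by the pullback of $\Fitt_0(\Phi^1_T(\kf_T))$ along the induced map $h_J\colon J_{T'}\to J_T$. Since $\Fitt_0(h_J^*\cG)=h_J^*\Fitt_0(\cG)$ for the ordinary pullback of a finitely presented sheaf $\cG$ (see \cite[20.4]{E}; as already used above, this is insensitive to the $\alpha$-twist, being computed from local finite presentations), I would reduce everything to producing a sheaf isomorphism $h_J^*\Phi^1_T(\kf_T)\simeq\Phi^1_{T'}(h^*\kf_T)$.

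The heart of the argument is a derived base-change isomorphism $Lh_J^*\circ\Phi_T\simeq\Phi_{T'}\circ Lh_X^*$, where $h_X\colon X_{T'}\to X_T$ is induced by $h$; this is the relative analogue of Proposition \ref{bch}. I would deduce it from the base-change theorem for $Rp_{J_T*}$ applied to the Cartesian square relating the total spaces $J_T\times_T X_T$ and $J_{T'}\times_{T'}X_{T'}$ with $J_T$ and $J_{T'}$: on the support of the kernel the projection $p_{J_T}$ is a smooth proper elliptic fibration, hence flat, so the square is Tor-independent and $Lh_J^*Rp_{J_T*}\simeq Rp_{J_{T'}*}L\tilde h^*$ for the induced map $\tilde h$ of total spaces. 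Combined with $L\tilde h^*\ku_T^0\simeq\ku_{T'}^0$ and $L\tilde h^*Lp_{X_T}^*\simeq Lp_{X_{T'}}^*Lh_X^*$, this gives the isomorphism of functors. Next I would note that $h^*\kf_T$ lies in $\cM\cS_X(n,0)(T')$, since pulling back over $S$ restricts to isomorphisms on fibres and therefore preserves fibrewise semistability and degree $0$; hence both $\kf_T$ and $h^*\kf_T$ are WIT$_1$, giving $\Phi_T(\kf_T)=\Phi^1_T(\kf_T)[-1]$ and $\Phi_{T'}(h^*\kf_T)=\Phi^1_{T'}(h^*\kf_T)[-1]$. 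The functor isomorphism then reads $Lh_J^*\Phi^1_T(\kf_T)\simeq\Phi^1_{T'}(h^*\kf_T)$, and taking $\mathcal{H}^0$ — where the right-hand side is already a sheaf in degree $0$ and $\mathcal{H}^0(Lh_J^*\Phi^1_T(\kf_T))=h_J^*\Phi^1_T(\kf_T)$ is the ordinary pullback — yields the desired $h_J^*\Phi^1_T(\kf_T)\simeq\Phi^1_{T'}(h^*\kf_T)$. Applying $\Fitt_0$ finishes the argument.

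I expect the base-change step to be the main obstacle, because $h$ need not be flat and $T'$ need not be reduced, so naive flat base change on the parameter spaces is unavailable. The way around this is that flatness is required only of the morphism being pushed forward, namely $p_{J_T}$, which is a smooth elliptic fibration and therefore flat and proper; this supplies the Tor-independence needed for the derived base change. The remaining delicate point is to pass from the derived isomorphism to the underived statement entering the Fitting-ideal formula, and this is exactly what the WIT$_1$ property arranges: it confines $\Phi^1_{T'}(h^*\kf_T)$ to a single degree, so that $\mathcal{H}^0$ recovers precisely the ordinary pullback that appears in the Fitting-ideal base-change formula.
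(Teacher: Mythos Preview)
Your proof is correct and follows the same two-step structure as the paper: first establish $h_J^*\Phi^1_T(\kf_T)\simeq\Phi^1_{T'}(h^*\kf_T)$, then invoke compatibility of Fitting ideals with base change. The only difference is in how the first isomorphism is justified. You pass through the full derived base-change isomorphism $Lh_J^*\circ\Phi_T\simeq\Phi_{T'}\circ Lh_X^*$ and then use the WIT$_1$ property on both sides to extract the degree-zero cohomology sheaf. The paper takes a shortcut: since the fibres of $p_{J_T}$ are one-dimensional and the sheaves involved are locally free, $R^1p_{J_T*}$ is the \emph{top} nonvanishing direct image, and the top direct image always commutes with arbitrary base change (this is the content of the cited \cite{BBRP}, Prop.~2.6). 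This bypasses the need to invoke Tor-independence or to argue separately that $h^*\kf_T$ is again WIT$_1$. Your route is more general-purpose and would work in higher relative dimension; the paper's is quicker in the present one-dimensional-fibre setting.
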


\begin{proof}Because the fibres of the morphisms 
$p_J: J_T\times X_T\to J_T$ are 1-dimensional and the sheaves $\kf_T$
are locally free, base change holds for $R^1p_{J*}$, see \cite{BBRP} Prop.2.6.
When the induced map $J_{T^{'}}\to J_T$ is denoted by $h_J$, then
$$h_J^*\Phi^1_T(\kf_T)\simeq\Phi^1_{T^{'}}(h^*\kf_T)$$
for any  $\kf_T\in\cM\cS_X(n,0)(T)$. Since the Fitting ideals are also
compatible with base change, the claim follows.
\end{proof}

The spectral covers $C(\kf_T)$ lead us to consider the relative
Douady functors
$$\kd^n: (An/S)^{op}\to (Sets),$$
where $(An/S)$ denotes the category of complex analytic spaces over $S$
and where a set $\kd^n(T)$ for a morphism $T\to S$ is defined as the 
set of analytic subspaces $Z\subset T\times E$ which are flat over $T$
and have 0-dimensional fibres of constant lenght $n$. The Douady functor 
$\kd^n$ is represented by a complex space $D^n(S\times E/S)$ over $S$,
see \cite{P}. For a point $s\in S$, $\kd^n(\{s\})$ is the set of 
0-dimensional subspaces of length $n$ and can be identified with 
the symmetric product $\Sym^n(E)$ because it is well known that the
Hilbert-Chow morphism, in our case the Douady-Barlet morphism,
$\kd^n(\{s\})\to\{s\}\times \Sym^n(E)$ is an isomorphism for the smooth
curve $E$, see \cite{Ba} Ch.V. It is then easy to show that also the relative
Douady-Barlet morphism $D^n(S\times E/S)\to S\times\Sym^n(E)$ is an
isomorphism. This implies that for any complex space $T$ over $S$ there is
bijection  
\begin{equation}\label{biject}
\kd^n(T)\xra{\sim}\Hom_S(T,S\times\Sym^n(E)).
\end{equation}

One should note here that the behavior of families of cycles is more
difficult to describe than of those for the Douady space.

Let now $\kd^n_r$ resp.$\km_X(n,0)_r$ be the restriction of the functors 
$\kd^n$ and $\km_X(n,0)$ to the
category $(Anr/S)$ of reduced complex analytic spaces. By the Lemmas 
\ref{fl} and \ref{func} the spectral covers give rise to a transformation
of functors
\begin{equation}\label{transf}
\km_X(n,0)_r\xra{\gamma}\kd^n_r\simeq\Hom_S(-,S\times\Sym^n(E)),
\end{equation}
where for a reduced space $T$ over $S$ and for a class $[\kf_T]$ in 
$\km_X(n,0)(T)$ we have $\gamma(T)(\kf_T)=C(\kf_T).$ Note that by flatness 
$C(\cF_T)$ depends only on the equivalence class
of $\cF_T$. 
We are now able to present the main theorem which generalises theorem
\ref{T3.2}.
\begin{Th} Let $X\to S$ be an elliptic principal bundle over a compact
complex manifold $S$ of arbitrary dimension with invariant $\delta\not = 0.$
Then the spectral cover induces a transformation of functors\\ 
$\gamma : \km_X(n,0)_r\to \Hom_S(-,S\times\Sym^n(E))$ with the following
properties.

(i) The functor $\cM_X(n,0)_r$ is corepresented by $S\times\Sym^n(E)$ 
via the transformation $\gamma$, 

(ii) For any point $s\in S$ the induced map 
$M_{X_s}(n,0)\to\Sym^n(E)$ is bijective.

(iii) The map $\gamma(T)$ is injective for any reduced complex
space $T$ over $S$.

(iv) $\cM_X(n,0)_r$ is locally representable by $S\times\Sym^n(E),$
i.e. if $U\subset S$ is a trivializing open subset for $X$ and $T$ is a complex
space over $U$, then $\gamma(T)$ is bijective.
\end{Th}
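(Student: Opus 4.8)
The plan is to follow closely the pattern of the proof of Theorem \ref{T3.2}, establishing the four assertions in the order (ii), (iii), (iv), (i). The transformation $\gamma$ has already been produced in \eqref{transf} from the spectral cover together with Lemmas \ref{fl} and \ref{func}, so what remains is to analyze it. For (ii), I would invoke Proposition \ref{wit1}(ii): for a bundle $\cF_s$ written as in \eqref{atiyah}, the Fitting scheme $C(\cF_s)$ is the zero-dimensional subscheme of $\{s\}\times E$ whose associated cycle is $\Sigma_j m_j[-y_j]$, i.e. the point $\Sigma_j m_j(s,-y_j)$ of $\Sym^n(E)$. Since $\gr(\cF_s)=\oplus_j\ko_E(y_j-p_0)^{\oplus m_j}$ and two semistable degree-$0$ bundles on $E$ are $S$-equivalent precisely when their associated graded sheaves agree, the unordered data $\{(y_j,m_j)\}$ is a complete invariant of the $S$-equivalence class. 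This identifies $M_{X_s}(n,0)$ with $\Sym^n(E)$ bijectively, the inverse being the classical spectral construction $\Psi$ applied to a length-$n$ skyscraper sheaf with the prescribed support and multiplicities.

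For (iii), suppose $\kf_T$ and $\kg_T$ in $\cM\cS_X(n,0)(T)$ have the same image, so $C(\kf_T)=C(\kg_T)$ as flat length-$n$ families over the reduced space $T$. Applying Lemma \ref{func} to the inclusion $h:\{t\}\to T$ gives $h^*C(\kf_T)\simeq C(\kf_{T,t})$, and likewise for $\kg_T$; since $T$ is reduced, equality of the two flat families forces equality fibrewise, $C(\kf_{T,t})=C(\kg_{T,t})$ for every $t$. Part (ii) applied on each fibre then yields $S$-equivalence of $\kf_{T,t}$ and $\kg_{T,t}$, which is exactly the equivalence relation defining $\km_X(n,0)(T)$; hence $[\kf_T]=[\kg_T]$ and $\gamma(T)$ is injective. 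For (iv), I would use that over a trivializing open $U\subset S$ the obstruction class $\alpha$ restricts to $0$, so for any $T\to U$ the transform $\Phi_T$ is the honest, untwisted relative Fourier--Mukai transform on $U\times E$. Bijectivity of $\gamma(T)$ then reduces to the classical spectral correspondence for elliptic fibrations: injectivity comes from (iii), while surjectivity requires, given a flat length-$n$ family $Z\subset T\times E$ (equivalently a point of $\Hom_U(T,U\times\Sym^n(E))$ via the relative Douady--Barlet isomorphism \eqref{biject}), producing a fibrewise semistable rank-$n$ degree-$0$ bundle $\kf_T$ with $C(\kf_T)=Z$. This is obtained by applying the inverse transform to a suitable sheaf supported on $Z$ and checking, fibrewise and then by base change, that the result is locally free with the prescribed spectral cover, as in \cite{FMW}, \cite{HM}, \cite{BBRP}.

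Finally (i), which mirrors the corepresentation argument of Theorem \ref{T3.2}. Given any natural transformation $\km_X(n,0)_r\xra{\sigma'}\Hom_S(-,N)$, I define $\nu$ fibrewise by $\nu_s:=\sigma'(\{s\})\circ\gamma(\{s\})^{-1}:(\Sym^n E)_s\to N_s$ using the bijectivity from (ii); this gives a set-map $\nu:S\times\Sym^n(E)\to N$ over $S$ which is forced, hence unique. To see that $\nu$ is a morphism and that $\nu_*\circ\gamma=\sigma'$, I restrict to a trivializing open $U$: by (iv) the map $\gamma$ is bijective on spaces over $U$, so on $T_U:=U\times\Sym^n(E)$ the class $\gamma(T_U)^{-1}(\id)$ is a universal family whose image under $\sigma'$ is a genuine morphism agreeing with $\nu|_U$. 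Thus $\nu$ is locally a morphism, hence a morphism, and the identity $\nu_*\circ\gamma=\sigma'$ follows by checking it over a trivializing cover of $X$ and using that $\Hom_S(-,N)$ is a sheaf, exactly as in the proof of Theorem \ref{T3.2}.

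The main obstacle I expect is the surjectivity half of (iv): because the global transform $\Phi$ takes values in twisted sheaves, it cannot be inverted over all of $S$, and the passage from a prescribed spectral cycle back to a genuine vector bundle must be performed over trivializing opens where the twist vanishes, with care to guarantee local freeness and the correct multiplicities. The restriction to reduced base spaces is precisely what validates the fibrewise comparison of flat families in (iii) and the flatness of the spectral cover in Lemma \ref{fl}, and is therefore essential throughout.
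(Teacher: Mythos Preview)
Your proof is correct and, for parts (ii), (iii) and (i), essentially identical to the paper's: (ii) via Proposition~\ref{wit1}(ii) and Atiyah's classification, (iii) by fibrewise comparison of spectral covers, and (i) by the same local-to-global corepresentation argument as in Theorem~\ref{T3.2}.

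The one genuine difference is the surjectivity half of (iv). You produce the required bundle by applying the inverse Fourier--Mukai transform $\Psi_T$ to a sheaf supported on the given flat family $Z\subset T\times E$, invoking the classical spectral construction of \cite{FMW}, \cite{HM}, \cite{BBRP}. The paper instead avoids the transform at this step and builds an explicit inverse $\beta(T)$ by hand: it first defines $b(T):\Hom_S(T,U\times E^n)\to\cM_X(n,0)(T)$ by pulling back the dual Poincar\'e bundle along each coordinate $f_\nu:T\to E$ to get line bundles $\kl_\nu$ on $T\times E$ and sending $(p,f)$ to the class $[\kl_1\oplus\cdots\oplus\kl_n]$; since this is $\mathfrak{S}_n$-invariant it descends to $\beta(T):\Hom_S(T,U\times\Sym^n(E))\to\cM_X(n,0)(T)$, which is then checked to invert $\gamma(T)$. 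Your route is more conceptual, uses the FM equivalence in both directions, and yields a canonical (generally non-split) representative of each S-equivalence class; the paper's route is more elementary---no derived machinery is needed for the inverse---and always produces the split representative, at the price of having to pass through $E^n$ and descend along the symmetric-group quotient.
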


\begin{proof} Property (ii) is clear by the construction of the functors.

\noindent We begin proving the injectivity in (iii). 
Let $[\cF_1],[\cF_2] \in \cM_X(n,0)(T)$ such that 
$\gamma(T)([\cF_1])=\gamma(T)[\cF_2])$. 
This implies that for every $t\in T$ the spectral covers of   
$(\cF_1)_t$ and $(\cF_2)_t$ are the same. If $(\cF_1)_t$ is S-equivalent
to $\oplus_j\ko_E(y_j-p_0)^{\oplus m_j}$, then 
$C((\cF_1)_t)=\Sigma_j m_j(-y_j)$ and visa versa by by Atiyah's classification.
Hence $(\cF_1)_t$ and $(\cF_2)_t$ are S-equivalent.
But this is precisely the equivalence relation for the classes 
$[\cF_1]$ and $[\cF_2]$.

\noindent To prove (iv), let $U\subset S$ be an open subset over which
$X$ is trivial and let $T\to U$ be a reduced complex space over $U$.
Then we can assume that $X_T = T\times E$. First we define a map 
$$\Hom_S(T, S\times E^n)\xra{b(T)}\cM_X(n,0)(T)$$ as follows.
Given a morphism $(p,f): T\to U\times E^n$ over $S$, let 
$f_\nu: T\to E$ be the $\nu$-th component of $f$. Let then  
$$\kl_\nu:= (f_\nu\times id)^*\ko_{E\times E}(-\Delta)\otimes p_2^*\ko_E(p_0)$$
on $T\times E$ be the pull back of the dual Poincare bundle.
Then the spectral cover of $\kl_{\nu, t}$ for any  point $t\in T$ 
consists of the point $f_\nu(t)\in E$. The map  $b(T)$ can now be defined
by $(p,f)\mapsto [\kl_1\oplus\cdots\oplus\kl_n]$. This map is obviously
$\mathfrak{S}_n$-equivariant and thus can be factorized through 
$\Hom_S(T, S\times\Sym^n(E))$, giving a map
$$\Hom_S(T, S\times\Sym^n(E))\xra{\beta(T)}\cM_X(n,0)(T).$$
By construction, $\beta(T)$ is an inverse of $\gamma(T).$

\noindent The proof of (i) is now analogous to that of (i) for Theorem 
\ref{T3.2}, using (iv).
\end{proof}
\end{section} 

\appendix\begin{section}{Invariants of torus bundles}
Let $M$ be an n-dimensional compact complex manifold, $T=V/ \Lambda$ 
an m-dimensional complex torus and $X\xrightarrow{\pi}M$ a principal bundle 
with fiber $T$. The theory of principal torus bundles is developed in great 
detail in \cite{Ho}; see also \cite{BU}. It is well known that 
such bundles are described by elements of $H^1(M,\cO_M(T))$, where 
$\cO_M(T)$ denotes the sheaf of local holomorphic maps from $M$ to $T$. 
Considering the exact sequence of groups
\begin{displaymath}
0 \to \Lambda \to V \to T \to 0
\end{displaymath}
and taking local sections we obtain the following exact sequence
\begin{displaymath}
0 \to \Lambda \to \cO_M \otimes V \to \cO_M(T) \to 0.
\end{displaymath}
Passing to the cohomology we have the long exact sequence
\begin{eqnarray*}
\cdots \to H^1(M,\Lambda) \to H^{0,1}_M \otimes V \to H^1(M,\cO_M(T)) 
\xrightarrow{c^\mathbb{Z}} \\
\xrightarrow{c^\mathbb{Z}} H^2(M,\Lambda)\to 
H^{0,2}_M \otimes V \to \cdots 
\end{eqnarray*}
By taking the image of the co-cycle defining the bundle via the map 
$c^\mathbb{Z}$ we obtain a characteristic class 
$c^\mathbb{Z}(X) \in H^2(M,\Lambda)=H^2(M,\mathbb{Z}) \otimes \Lambda$ 
and also a characteristic class $c(X) \in H^2(M,\bC) \otimes V$.

Concerning some important sheaves on $X$ we have (see \cite{Ho}): 
\begin{equation}\label{1}
\cK_X =\pi^*\cK_M, \quad R^i\pi_*\cO_X=\cO_M \otimes_\bC H^{0,i}(T) 
\end{equation}
and the exact sequence
\begin{equation}\label{2}
%0 \to \pi^*\Omega_M^1 \to \Omega^1_X \to \cO_X \otimes_\bC H^{1,0}(T) \to 0.
0\to\Omega_M^1\to\pi_\ast\Omega^1_X\to \cO_M \otimes_\bC H^{1,0}(T)
\to 0.
\end{equation}
All the informations concerning the topology of the bundle $X \to M$ are 
given by the following invariants
\begin{enumerate}
\item[a)] The exact sequence (\ref{2}) gives rise to an element 
$\gamma \in \textrm{Ext}^1(\cO_M \otimes H^{1,0}(T),
\Omega_M^1)=H^1(\Omega^1_M)\otimes H^{1,0}(T)^*$.
Thus $\gamma$ is a map $H^{1,0}(T)$ $ \to H^{1,1}(M)$.
\item[b)] The first non-trivial $d_2-$ differential in the Leray spectral 
sequence ($d_2:E_2^{0,1} \to E_2^{2,0}$) of the 
sheaf $\bC_X$. We obtain in this way a map $\delta:H^1(T,\bC) \to H^2(M,\bC)$. 
In the same way we may define the maps 
$\delta^\bZ:H^1(T,\bZ) \to H^2(M,\bZ).$\\
\item[c)] The first non-trivial $d_2-$differential in the Leray spectral 
sequence of $\cO_X,$ where 
$d_2:H^0(R^1\pi_*\cO_X) \to H^2(\pi_*\cO_X)$. Via
the identifications (\ref{1}) we get a map 
$\epsilon : H^{0,1}(T) \to H^{0,2}(M)$.
\item[d)] The characteristic classes $c^\bZ(X)$ and $c(X)$ defined above.
\end{enumerate}

These invariants are related by the following theorem of H\"ofer:
\begin{Th}
Let $X \xrightarrow{\pi}M$ be a holomorphic principal $T$-bundle. Then:
\begin{enumerate}
\item The Borel spectral sequence (\cite[Appendix Two by A.Borel]{Bo})\newline 
$^{p,q}E^{s,t}_2$ = $\sum H^{i,s-i}(M) \otimes H^{p-i,t-p+i}(T)$ degenerates 
on $E_3-$ level and the $d_2-$differential is given by $\epsilon$ and $\gamma$.
\item The Leray spectral sequence $E_2^{s,t}=H^s(M,\bC) \otimes H^t(T,\bC)$ 
degenerates on $E_3-$ level and the $d_2-$ 
differential is given by $\delta$.
\item Via the identification $H^1(T,\bZ) = \textrm{Hom}(\Lambda,\bZ)$ 
the characteristic class $c^\bZ$ and the map $\delta^\bZ$ coincide.
\item $\delta$ is determined by $\delta^\bZ$ via scalar extension.
\item If $H^2(M)$ has Hodge decomposition then $\delta$ determines 
$\epsilon$ and $\gamma$ and conversely.
\end{enumerate}
\end{Th}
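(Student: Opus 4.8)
The plan is to derive all five assertions from one structural fact: the cohomology of the fibre $T=V/\Lambda$ is an exterior algebra generated in degree one, and both spectral sequences in question are multiplicative. First I would note that the structure group $T$ is connected and acts on the fibre by translations, which are homotopic to the identity; hence the local systems $R^t\pi_*\bC_X$ are constant and the Leray $E_2$-term is $H^s(M,\bC)\otimes H^t(T,\bC)$ with $H^\bullet(T,\bC)=\Lambda^\bullet H^1(T,\bC)$, as already recorded in the statement. Dually, the Borel $E_2$-term carries the fibre factor $\Lambda^\bullet\bigl(H^{1,0}(T)\oplus H^{0,1}(T)\bigr)$. Since both are spectral sequences of (bi)graded algebras, every differential is a derivation.

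For assertions (1) and (2) I would use that each $E_2$-page is generated, as an algebra over the subalgebra of base classes, by the degree-one fibre classes. The base classes are pulled back from $M$, hence are permanent cocycles; consequently $d_2$ is determined by its restriction to the degree-one fibre generators. By the definitions recalled before the theorem these restrictions are exactly $\delta$ in the Leray case and $\gamma$, $\epsilon$ (on $H^{1,0}(T)$ and $H^{0,1}(T)$ respectively) in the Borel case. To obtain $E_3$-degeneration, I would observe that for $r\ge 3$ the differential $d_r$ lowers the fibre degree by $r-1\ge 2$, so it annihilates both the base classes and the degree-one fibre generators for pure degree reasons; being a derivation it then vanishes identically, giving $E_3=E_\infty$.

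Assertion (3) I would prove by identifying the integral transgression $\delta^\bZ\colon H^1(T,\bZ)\to H^2(M,\bZ)$ with the connecting map $c^\bZ$ of the sequence $0\to\Lambda\to V\to T\to 0$. Under $H^1(T,\bZ)\cong\Hom(\Lambda,\bZ)$ both are computed from the single \v Cech cocycle with values in $\cO_M(T)$ defining $X$, and the coboundary producing $c^\bZ$ is precisely the transgression of the fibre generators; this is the standard ``transgression equals characteristic class'' identity for principal bundles, and I would make the cochain-level matching explicit. Assertion (4) is then the naturality of $d_2$ under $\bZ\hookrightarrow\bC$: the complex-coefficient Leray sequence is the scalar extension of the integral one, whence $\delta=\delta^\bZ\otimes_\bZ\bC$.

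Finally, for (5) I would compare the Leray and Borel pictures. Splitting $H^1(T,\bC)=H^{1,0}(T)\oplus H^{0,1}(T)$ and, using the hypothesised Hodge decomposition, $H^2(M,\bC)=H^{2,0}(M)\oplus H^{1,1}(M)\oplus H^{0,2}(M)$, I would show that the $(1,0)$-component of $\delta$ lands in $H^{1,1}(M)$ and coincides with $\gamma$, while the $(0,1)$-component lands in $H^{0,2}(M)$ and coincides with $\epsilon$, the remaining Hodge blocks vanishing by holomorphicity of the bundle. Given the decomposition, $\gamma$ and $\epsilon$ are exactly the two surviving blocks of $\delta$, so each determines the other. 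The \textbf{main obstacle} is precisely this last step: matching one de Rham transgression with two Dolbeault transgressions requires a careful comparison of the multiplicative Leray and Borel spectral sequences together with a type-counting argument to kill the unwanted Hodge components, and it is here that the Hodge-decomposition hypothesis is indispensable; step (3), the identification of a spectral-sequence $d_2$ with a \v Cech connecting map, is the second most delicate point.
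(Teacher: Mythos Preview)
The paper does not give a proof of this theorem: it is quoted verbatim as ``the following theorem of H\"ofer'' with a reference to \cite{Ho}, and is used as a black box for the computations in Appendix~B. There is therefore no argument in the paper against which to compare your proposal.

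On its own merits your outline is the standard one and is essentially correct, with two points that need tightening. First, your $E_3$--degeneration step in (1)--(2) is not a pure degree count. The differential $d_r$ is a derivation on $E_r$, not on $E_2$, and it is not automatic that $E_r$ is still generated, over the image of the base classes, by elements of fibre degree~$\le 1$. For a \emph{principal} torus bundle the clean way to close this gap is to build the Hirsch (Koszul) model $\bigl(\Omega^{\bullet}(M)\otimes\Lambda^{\bullet}H^1(T)^*,\,D\bigr)$ with $D$ equal to $d_M$ on the first factor and the transgression on the degree-one generators of the second; since $D$ raises the base filtration only by $1$ or $2$, the associated spectral sequence has $d_r=0$ for $r\ge 3$. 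The analogous Dolbeault model handles the Borel case.

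Second, in (5) your assertion that ``the remaining Hodge blocks vanish by holomorphicity'' is not quite right. Writing $\delta(dt)=\eta$, holomorphicity of the bundle (equivalently, the vanishing of the image of $c^{\bZ}(X)$ in $H^{0,2}(M)\otimes V$) forces $\eta^{0,2}=0$, but $\eta^{2,0}$ need not vanish. The actual matching, which the paper spells out for the elliptic-fibre case immediately after the statement, is $\gamma(dt)=\eta^{1,1}$ and $\epsilon(d\bar t)=\bar\eta^{0,2}=\overline{\eta^{2,0}}$. Thus $\gamma$ and $\epsilon$ together recover $\eta^{1,1}$ and $\eta^{2,0}$, and with $\eta^{0,2}=0$ one reconstructs $\eta$ and hence $\delta$; conversely, projecting $\delta$ onto Hodge components yields $\gamma$ and $\epsilon$. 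Your block-decomposition strategy is the right one, but the bookkeeping of which blocks survive must be corrected.
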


In order to compute the Dolbeault cohomology of $X$ we need to use the Borel 
spectral sequence because the direct-image 
sheaves $R^j \pi_*\Omega_X^p$ are non-trivial for $p > 0$ 
and the Leray spectral sequence is more difficult to use.
\end{section}

\begin{section}{Invariants of elliptic principal bundles over surfaces}

In what follows we shall consider fiber bundles with basis $M$ a smooth 
complex surface  and fiber $T$ an elliptic curve. In this case 
a more detailed description is possible. Let $(1,\tau)$ be a basis of 
$\Lambda$, let $(dt,d\bar{t})$ be a basis of $H^1(T,\bC)$ 
given by the decomposition $H^1(T,\bC)=H^{1,0}(T) \oplus H^{0,1}(T)$ 
(another basis on $H^1(T,\bC)$ is given by the 
canonical coordinates $(dx_1,dx_2)$). Assume that 
$c^\bZ=a \otimes 1 + b \otimes \tau$, then $c=(a+b \cdot \tau)\otimes 1
=\eta \otimes 1$, with $\eta^{02}=0$. Then we have
\begin{eqnarray*}
\delta: dt \mapsto a+ \tau \cdot b = \eta \\
\quad d\bar{t} \mapsto a+\bar{\tau} \cdot b=\bar{\eta} \\
\epsilon : d\bar{t} \mapsto  (a+\bar{\tau} \cdot b)^{02}=\bar{\eta}^{02} \\
\gamma : dt \mapsto  (a+\tau \cdot b)^{11}=\eta^{11}.
\end{eqnarray*}
The only non-zero terms in the Borel spectral sequence are
\begin{equation}\label{3}
\xymatrix@C=0pt{ & H^{p-1,q-2} (M) \otimes H^{1,1}(T) \ar[dl]^\gamma 
\ar[dr]^{- \epsilon} & \\
 H^{p,q-1}(M)\otimes H^{0,1}(T) \ar[dr]^\epsilon & \oplus &  
 H^{p-1,q}(M)\otimes H^{1,0}(T) \ar[dl]^\gamma \\
& H^{p,q+1}(M) \otimes H^{0,0}(T). & } 
\end{equation}

From now on we shall be concerned with the case when $M$ has trivial 
canonical bundle. By (\ref{1}) this implies that $X$ 
also has trivial canonical bundle. The case when $M$ is K\"ahler was 
considered by H\"ofer. This leaves us with the case 
when $M$ is a primary Kodaira surface. We shall use the preceding diagram 
to compute the Hodge numbers for $X$ in this 
case. Recall that the Hodge diamond for a primary Kodaira together with 
Betti numbers is, see \cite[V.5.]{BPV}.
\begin{displaymath}
\begin{array}{ccccccc}
& &  1 &  & & & 1   \\
& 2 & & 1 &  & & 3  \\
1 & &  2 & & 1  & & 4 \\
& 1 & & 2 & & & 3\\
& & 1 & & & & 1
\end{array}
\end{displaymath}
Taking into account the Hodge numbers from above and the fact that all 
the Dolbeault groups of the elliptic curve that 
appear in (\ref{3}) are 1-dimensional we obtain the following Hodge diamond 
for $X$
\begin{equation}\label{4}
\begin{array}{ccccccccccccccccc}
& & & 1 & & & \\
& & 2-e & & 3-g & & \\
& 3-e &  & 6-g-h & & 2-g & \\
1 & & 5-g-h &  & 5-g-h & & 1 \\
& 2-g & & 6-g-h & & 3-e & &  \\
& & 2-g & & 1-e & & \\
& & & 1 & & & 
\end{array}
\end{equation}
where $e=\textrm{Rank}(\epsilon);g=\textrm{Rank}(\gamma)$ and $h$ is the 
rank of the map given by the multiplication with 
$\gamma(dt)-\epsilon(d\bar{t})$.

The first Betti number is given by 
$b_1(X) = b_1(M)+\textrm{dim Ker}(\delta)=3+2-d=5-d$, where 
$d=\textrm{Rank}(\delta)$.
To compute the Betti number $b_2(X)$ we shall use the Leray spectral 
sequence for the constant sheaf $\bC_X$. We have 
$E_2^{pq} = H^p(M,R^q\pi_*\bC_X)=H^P(M,\bC) \otimes H^q(T,\bC)$, 
the $d_2-$differential is determined by $\delta:E_2^{0,1}=
H^1(T,\bC) \to E_2^{20}=H^2(M,\bC)$ and the sequence degenerates at the 
$E_3$ level. In this case 
$E_\infty ^{02}=E_3^{02}=\textrm{Ker}(E_2^{02} \to E_2^{21})=
\textrm{Ker}(H^2(T,\bC)\to H^2(M,\bC)\otimes H^1(T,\bC))$, and so 
$E_3^{02}\simeq 0$ 
(we assumed that $\delta \neq 0$). Moreover, $E_3^{11}=\textrm{Ker}(E_2^{11} 
\to E_2^{30})=
\textrm{Ker}(H^1(M,\bC) \otimes H^1(T,\bC) \to H^3(M,\bC))$.
It follows that $\dim(E_3^{11})=6-d'$, where $d'$ is the rank of the map 
obtained by composing $\delta$ and the cup-product. 
Similarly, $\textrm{dim}(E_3^{20})=4-d$. We have the filtration 
$0 \subset F_2 \subset F_1 \subset F_0=H^2(X,\bC)$ 
associated with the spectral sequence (that is 
$F_2 \simeq E_\infty^{20}, F_1/F_2 \simeq E_\infty^{11}$ and $F_0/F_1 
\simeq E_\infty^{02}=0$). So we obtain an exact sequence 
$0 \to F_2 \to F_1 \to F_1/F_2 \to 0.$ From the above 
computations it follows that 
$b_2(X)=\textrm{dim}(E_\infty^{20})+\textrm{dim}(E_\infty^{11})=10-d-d'$.

For $b_3(X)$ we remark that in the Leray filtration 
$0 \subset F_3 \subset F_2 \subset F_1 \subset F_0=H^3(X,\bC)$ we have 
$F_1 = F_0$. This makes the things easier and by a 2-step computation 
we obtain that $b_3(X)=12-2d'$ so we can complete 
the table (\ref{4}) with
\begin{equation}\label{5}
\begin{array}{ccccccccccccccccc}
  1\\
  5-d\\
 10-d-d'\\
 12-2d'\\
  10-d-d'\\
 5-d\\
 1
\end{array}
\end{equation}
For comparison purpose we present also the results of H\"ofer 
(\cite[13.6,13.7]{Ho}). In the case $M$ is a torus we get 
 \begin{equation}
\begin{array}{ccccccccccccccccc}
& & & 1 & & & &  1\\
& & 5-f-g & & 3-e & & &  4\\
& 3-h &  & 8-f-g & & 3-e & &  8\\
1 & & 6-h &  & 6-h & & 1 &  10\\
& 3-e & & 8-f-g & & 3-h & &   8\\
& & 3-e & & 5-f-g & & & 4\\
& & & 1 & & & & 1,
\end{array}
\end{equation}
where $h:=\textrm{Rank}(H^{0,1}(M)\otimes H^{1,0}(T) 
\xrightarrow{\gamma(dt)\land \cdot} H^{1,2}(M)$ and
\newline
$f:= \textrm{Rank}(H^{1,0}(M)
\otimes H^{0,1}(T) \xrightarrow{\epsilon(d\bar{t})\land \cdot} H^{1,2}$. 
When $M$ is a $K3$ surface we have
\begin{equation}
\begin{array}{ccccccccccccccccc}
& & & 1 & & & &  1\\
& & 1-g & & 1-e & & &  0\\
& 1 &  & 20-g & & 1-e & &  20\\
1 & & 20 &  & 20 & & 1 &  42\\
& 1-e & & 20-g & & 1 & &   20\\
& & 1-e & & 1-g & & & 0\\
& & & 1 & & & & 1.
\end{array}
\end{equation}
\end{section}

\bibliographystyle{is-alpha}
\bibliography{Threef-biblio}

\end{document}